\documentclass[10pt]{article}
\usepackage{color}
\usepackage{amsmath}
\usepackage{graphicx}
\usepackage{footnote}
\usepackage{subscript}
\usepackage{amsfonts,color}
\usepackage{amsmath,amssymb}
\usepackage{epsfig}
\usepackage{amssymb} 
\usepackage{mathtools}
\usepackage{amsthm,wasysym}
\usepackage{framed}
\usepackage{cancel}

\usepackage[english]{babel}
\usepackage[utf8]{inputenc}
\makeatletter
\usepackage{float}
\usepackage{wrapfig}

\makeatletter
\let\@fnsymbol\@arabic
\makeatother

\usepackage{bbm}
\newcommand{\id}{{\boldsymbol{\mathbbm{1}}}}

\setlength{\textheight}{23cm}
\setlength{\textwidth}{17cm}
\setlength{\topmargin}{-2cm}
\setlength{\oddsidemargin}{-0cm}
\setlength{\evensidemargin}{-1cm}

\setlength\arraycolsep{2pt}

\DeclareMathOperator{\sym}{sym}
\DeclareMathOperator{\skyw}{skew}
\DeclareMathOperator{\curl}{curl}
\DeclareMathOperator{\Curl}{Curl}
\DeclareMathOperator{\dyw}{div}
\DeclareMathOperator{\Dyw}{Div}

\DeclareMathOperator{\tr}{tr}

\newcommand{\nn}{\nonumber}

\newcommand{\dhk}{D^h_k}

\newcommand{\R}{{\mathbb R}}

\newcommand{\E}{{\cal E}}

\newcommand{\di}{{\mathrm d}}

\newcommand{\bigchi}{\raisebox{3pt}{$\chi$}}

\renewcommand{\leq}{\leqslant}
\renewcommand{\geq}{\geqslant}
\makeatletter
\let\@fnsymbol\@arabic
\makeatother
\title{{\bf A note on local higher regularity in the dynamic linear relaxed micromorphic model}}
\author{Sebastian Owczarek\,\thanks{Sebastian Owczarek, \ \  Faculty of Mathematics and Information Science, Warsaw University of Technology, ul. Koszykowa 75, 00-662 Warsaw, Poland; email: s.owczarek@mini.pw.edu.pl}\quad
	and \quad	Ionel-Dumitrel Ghiba\footnote{Ionel-Dumitrel Ghiba, Corresponding author:\   Alexandru Ioan Cuza University of Ia\c si, Department of Mathematics,  Blvd. Carol I, no. 11, 700506 Ia\c si,
		Romania;  Octav Mayer Institute of Mathematics of the
		Romanian Academy, Ia\c si Branch,  700505 Ia\c si; email: dumitrel.ghiba@uaic.ro} 	\quad
	and \quad
		Patrizio Neff\,\thanks{Patrizio Neff, \ \  Head of Lehrstuhl f\"{u}r Nichtlineare Analysis und Modellierung, Fakult\"{a}t f\"{u}r Mathematik, Universit\"{a}t Duisburg-Essen, Campus Essen, Thea-Leymann Str. 9, 45127 Essen, Germany, email: patrizio.neff@uni-due.de}}
\date{}

\renewcommand{\theequation}{\thesection.\arabic{equation}}
\setcounter{equation}{0}
\newtheorem{tw}{Theorem}[section]

\newtheorem{de}[tw]{Definition}
\newtheorem{col}[tw]{Corollary}
\newtheorem{remark}[tw]{Remark}

\begin{document}
\maketitle
\bigskip
\begin{abstract}

We consider the regularity question of solutions for the dynamic initial-boundary value problem for the linear relaxed micromorphic model. This generalized continuum model couples a wave-type equation for the displacement with a generalized Maxwell-type wave equation for the micro-distortion. Naturally solutions  are found in ${\rm H}^1$ for the displacement $u$ and ${\rm H}(\Curl)$ for the microdistortion $P$.  Using energy estimates for difference quotients, we improve this regularity. We show 
 ${\rm H}^1_{\rm loc}$--regularity for the displacement field, ${\rm H}^1_{\rm loc}$--regularity for the micro-distortion tensor $P$ and  that   ${\rm Curl}\,P$  is ${\rm H}^1$--regular if the data is sufficiently smooth.

\bigskip

\noindent\textit{Mathematics Subject Classification}: 
35M33, 	35Q74, 	74H20, 	74M25, 	74B99 
\smallskip

\noindent\textit{Keywords}: 
tangential trace, extension operator, generalized continua, inhomogeneous boundary conditions
\end{abstract}

\section{Introduction}

Generalized continuum theories like the micromorphic or Cosserat model have a long history \cite{Mindlin64,Eringen99}. These models are endowed with additional degrees of freedom (as compared to standard linear elasticity) which are meant to capture effects from a microscale on a continuum level. In the micromorphic family, each macroscopic material point is displaced with the classical displacement $u:\Omega\times[0,T]\rightarrow\mathbb{R}^3$ and attached to the macroscopic point there is an affine micro-distortion field $P:\Omega\times[0,T]\rightarrow\mathbb{R}^{3\times3}$  describing the interaction with the microstructure. 

The equations of motion are obtained from Hamilton's principle and the introduction of suitable kinetic and elastic energy expressios. Typical for the classical micromorphic model is a quadratic term  $\frac{1}{2}\lVert \nabla P\rVert^2$ leading to an equilibrium problem of the type
\begin{eqnarray}
\label{1}
u_{,tt}&=&\Dyw\big(\nabla u-P\big)+f=\Delta u-\Dyw P+f,\\[1ex]
P_{,tt}&=&(\nabla u-P)- \sym P+\Delta P+M.\notag
\end{eqnarray}
Unique  solutions are found in classical Hilbert spaces, $u\in {\rm C}([0,T];{\rm H}
^1(\Omega, \mathbb{R}^3))$ and $P\in {\rm C}([0,T];{\rm H}
^1(\Omega, \mathbb{R}^{3\times 3}))$ for all times $T>0$, under suitable initial and boundary conditions and for these models \cite{picard,IesanNappa2001}, higher regularity follows from the standard approach for the wave equation. 

Departing from this framework, Neff and his co-workers have introduced the so called relaxed micromorphic model \cite{NeffGhibaMicroModel}. Here, the ``curvature energy'' $\frac{1}{2}\lVert \nabla P\rVert^2$ is replaced by $\frac{1}{2}\lVert \Curl P\rVert^2$, which represents a sort of ``relaxation'' since the interaction-strength of the microstructure with itself is getting much weaker. The typical set of equations turns into 
\begin{align}
\label{2}
u_{,tt}&=\Dyw\big(\nabla u-P\big)+f,\\[1ex]
P_{,tt}&=(\nabla u-P)- \sym P+\Curl \Curl P+M,\notag
\end{align}
under suitable initial and boundary conditions, which for $P$ has to be a tangential boundary condition. The second equation can be seen as a generalized Maxwell-problem due to the $\Curl \Curl$-operator. Unique solutions are now found in the natural setting $u\in {\rm C}([0,T];{\rm H})
^1(\Omega, \mathbb{R}^3)$ and $P\in {\rm C}([0,T];{\rm H}
(\Curl; \Omega))$ for all times $T>0$, see \cite{GhibaNeffExistence,Sebastian1}.

The current interest for the relaxed micromorphic model mainly stems from the fact that it is able to describe frequency-band gaps as observed in real meta-materials  (see e.g. \cite{blanco2000large,liu2000locally} and \cite{MadeoNeffGhibaW,madeo2016reflection,d2017panorama,aivaliotis2019microstructure,barbagallo2019relaxed,aivaliotisfrequency,d2019effective}). 
When it comes to the numerical implementation, it would be natural to use the pair of function spaces  ${\rm H}
^1(\Omega, \mathbb{R}^3)\times {\rm H}
(\Curl; \Omega)$. If, on the other hand, a standard ${\rm H}
^1(\Omega, \mathbb{R}^3)\times {\rm H}
^1(\Omega, \mathbb{R}^{3\times 3})$ framework is used (which is more easily available from relevant software packages) the occurring approximation error hinges on the local regularity of solutions. 
Indeed, in this paper we show (Theorem \ref{mr}) that $u\in {\rm H}^2_{\rm loc}(\Omega)$ and $P\in {\rm H}^1_{\rm loc}(\Omega)$ can be achieved under suitable regularity assumptions on the data. This establishes convergence of standard numerical FEM-implementations in the interior and constitutes a major motivation for our work. While in the static case, this kind of improved regularity (from $H(\Curl;\Omega)$ to ${\rm H}^1_{\rm loc}(\Omega)$) for the microdistortion field $P$ is illusory, the dynamic formulation provides much more control of the appearing fields through the kinematic terms.

In a previous paper \cite{Sebastian1}, starting from some results established by Alonsi and Valli \cite{tantrace}, we have seen that for all $G\in \widetilde{\chi}_{\partial\Omega}:= \{G\in {\rm H}^{\frac{1}{2}}(\partial\Omega)\,\,;\,\, \bigl\langle G_i\big|_{\partial\Omega}, n\bigr\rangle=0\}$ there exists an  extension $\widetilde{G}\in {\rm H}(\Curl;\Omega)$ such that $\Curl \Curl \widetilde{G}=0$ and which belongs actually  to ${\rm H}^1(\Omega)$. This result is useful in order to prove that the initial-boundary value problem with non-homogeneous boundary condition admits a unique solution $(u, P)$ with the regularity: 
$
u\in {\rm C}^1([0,T];{\rm H}^1(\Omega ))\,, u_{,tt}\in {\rm C}([0,T];{\rm L}^2(\Omega ))\,,$ $
P\in {\rm C}^1([0,T]; {\rm H}(\Curl;\Omega )),\  P_{,tt}\in {\rm C}([0,T];{\rm L}^2(\Omega)), $ $  \Curl\Curl P\in {\rm C}([0,T];{\rm L}^2(\Omega  )),$ for all times $T>0$.
Moreover, we have shown that any extension
 $\widetilde{v}\in{\rm H}(\curl;\Omega)$ of $v\in \tilde{\bigchi}_{\partial\Omega}$ is such that $\nabla\curl \tilde {v}\in {\rm L}^2(\Omega)$, see \cite{Sebastian1}. In the present paper, starting from this remark and by using some standard techniques, we prove  that under suitable assumptions on the data, the solution $(u, P)$ is in fact smoother, i.e.
$
 u\in {\rm L}^{\infty}(0,T;{\rm H}^2_{\rm loc}(\Omega))\,,  P\in {\rm L}^{\infty}(0,T;{\rm H}^1_{\rm loc}(\Omega))   \mathrm{\,\,and} \
 \Curl P\in {\rm L}^{\infty}(0,T;{\rm H}^1(\Omega))\,,
 $
 for all times $T>0$. We point out that this result may seem surprisingly  unusual, since we have no information on ${\rm Div} \,P$ in $\Omega$ and not even on $P_i\cdot n$ ($i=1,2,3$) on $\partial \Omega$.

\section{The relaxed micromorphic model}
\renewcommand{\theequation}{\thesection.\arabic{equation}}
\setcounter{equation}{0}%

We consider $\Omega$ to be a connected, bounded, open subset of $\mathbb{R}^3$ with a  ${\rm C}^{1,1}$  boundary $\partial\Omega$ and $T > 0$ is a fixed length of the time interval. The domain $\Omega$ is occupied by a micromorphic continuum whose motion  is referred to a fixed system of rectangular Cartesian axes $Ox_i$ $(i=1,2,3)$. 
\subsection{Notations}
 Throughout this paper {(if we do not specify otherwise)} Latin subscripts take the values $1,2,3$. 
We denote by $\mathbb{R}^{3\times 3}$ the set of real $3\times 3$ matrices.   For all $X\in\mathbb{R}^{3\times3}$ we set ${\rm sym}\, X=\frac{1}{2}(X^T+X)$ and ${\rm skew} X=\frac{1}{2}(X-X^T)$.
The standard Euclidean scalar product on $\mathbb{R}^{3\times 3}$ is given by
$\langle {X},{Y}\rangle_{\mathbb{R}^{3\times3}}=\tr({X Y^T})$, and thus the Frobenius tensor norm is
$ \lVert {X}\rVert^2=\langle{X},{X}\rangle_{\mathbb{R}^{3\times3}}$. In the following we omit the index
$\mathbb{R}^{3\times3}$. The identity tensor on $\mathbb{R}^{3\times3}$ will be denoted by $\id$, so that
$\tr({X})=\langle{X},{\id}\rangle$. Typical conventions for differential
operations are implied such as comma followed
by a subscript to denote the partial derivative with respect to
the corresponding Cartesian coordinate, while $t$ after a comma denotes the partial derivative with respect to the time. A matrix having the  three  column vectors $A_1,A_2, A_3$ will be written as 
$
(A_1\,|\, A_2\,|\,A_3).
$

We denote by $u:\Omega\times [0,T]\rightarrow \R^3$  the displacement vector of the material point, while $P:\Omega\times [0,T]\rightarrow \R^{3\times 3}$ describes the substructure of the material which can rotate, stretch, shear and shrink (the micro-distortion). Here, $T > 0$ is a fixed length of the time interval.  
For vector fields $u=\left(    u_1, u_2,u_3\right)$ with  $u_i\in {\rm H}^{1}(\Omega)$, $i=1,2,3$,
we define
$
\nabla \,u:=\left(
\nabla\,  u_1\,|\,
\nabla\, u_2\,|\,
\nabla\, u_3
\right)^T,
$
while for tensor fields $P$ with rows in ${\rm H}({\rm curl}\,; \Omega)$, i.e.
$
P=\begin{pmatrix}
(P^T.e_1)^T\,|\,
(P^T.e_2)^T\,|\,
(P^T.e_3)^T
\end{pmatrix}^T$, $P^T.e_i\in {\rm H}({\rm curl}\,; \Omega)$, $i=1,2,3$,
we define
$$
{\rm Curl}\,P:=\begin{pmatrix}
{\rm curl}\, (P^T.e_1)^T\,|\,
{\rm curl}\, (P^T.e_2)^T\,|\,
{\rm curl}\, (P^T.e_3)^T
\end{pmatrix}^T
.
$$
 The corresponding Sobolev spaces for the second order tensor fields $P$, ${\rm Curl}\, P$ and $\nabla\, u$  will be denoted by
$
{\rm H}^1(\Omega) \ \ \text{and}\  \ {\rm H}({\rm Curl}\,; \Omega)\, ,
$ and $
{\rm H}^1_0(\Omega) \ \ \text{and}\  \ {\rm H}_0({\rm Curl}\,; \Omega)\, ,
$ respectively.

\subsection{The initial-boundary value problem in the linear relaxed micromorphic theory}\label{sectmodel}

The partial differential equations associated to the dynamical relaxed  micromorphic model \cite{NeffGhibaMicroModel} are
\begin{eqnarray}
\label{1.2}
u_{,tt}&=&\Dyw\big(2\,\mu_{\rm e}\sym(\nabla u-P)+2\,\mu_{\rm c}\skyw(\nabla u-P)+\lambda_{\rm e}\tr(\nabla u-P)\id\big)+f\nn\\[1ex]
P_{,tt}&=&2\,\mu_{\rm e}\sym(\nabla u-P)+2\,\mu_{\rm c}\skyw(\nabla u-P)+\lambda_{\rm e}\tr(\nabla u-P)\id\nn\\[1ex]
&&-(2\,\mu_{\mathrm{micro}} \sym P+\lambda_{\mathrm{micro}}(\tr P)\id)-\mu_{\rm micro}\, L_{\rm c}^2\,\Curl\Curl P+M\,,
\end{eqnarray}
in $\Omega\times (0,T)$, where $f:\Omega\times (0,T)\rightarrow \R^3$ is a given body force and $M:\Omega\times (0,T)\rightarrow \R^{3\times 3}$ is a given body moment tensor.

Here, the constants  $\mu_{\rm e},\lambda_{\rm e},\mu_{\rm c}, \mu_{\rm micro}, \lambda_{\rm micro}$  are constitutive parameters describing the isotropic elastic response of the material, while $L_{\rm c}>0$ is the characteristic length of the relaxed micromorphic
model. We assume that the constitutive parameters are such that 
\begin{align}\label{condpara}
\mu_{\rm e}>0,\quad\quad  2\,\mu_{\rm e}+3\,\lambda _{\rm e}>0,\quad\quad  \mu_{\rm c}\geq 0,\quad\quad  \mu_{\rm micro}>0, \quad\quad  2\,\mu_{\rm micro}+3\,\lambda _{\rm micro}>0.
\end{align}

The system \eqref{1.2} is considered with the boundary conditions 
\begin{equation}
\begin{split}
u(x,t)=g(x,t),\quad \quad P_i(x,t)\times n(x)=G_i(x,t)
\end{split}
\label{1.3}
\end{equation}
for $(x,t)\in \partial\Omega\times [0,T]$, where $n$ is the unit normal vector at the surface $\partial\Omega$, $\times$ denotes the vector product and $P_i$ ($i=1,2,3$) are the rows of $P$. The model is also driven by the following initial conditions 
\begin{equation}
\begin{split}
u(x,0)=u^{(0)}(x)\,,\quad u_{,t}(x,0)=u^{(1)}(x)\,,\quad P(x,0)=P^{(0)}(x)\,,\quad P_{,t}(x,0)=P^{(1)}(x)
\end{split}
\label{1.4}
\end{equation}
for $x\in\Omega$.

\begin{de}
We say that the initial data $(u^{(0)},u^{(1)},P^{(0)},P^{(1)})$ satisfy the compatibility condition if
\begin{equation}
u^{(0)}(x)=g(x,0)\,,\quad \ \,  
u^{(1)}(x)=g_{,t}(x,0)\,, \quad 
P^{(0)}_{i}(x)\times n(x)=G_i(x,0)\,,\quad 
P^{(1)}_{i}(x)\times n(x)=G_{i,t}(x,0)
\label{comcon}
\end{equation}
for $x\in\partial\Omega$ and $i=1,2,3$, where $G_{i,t}$ denotes the time derivative of the function $G_i$.
\end{de}

\subsection{Preliminary results}

In a previous paper \cite{Sebastian1} we have considered the space 
\begin{equation}
\tilde{\bigchi}_{\partial\Omega}:=\{v\in {\rm H}^{\frac{1}{2}}(\partial\Omega)\mid\bigl\langle v\big|_{\partial\Omega}, n\bigr\rangle=0\}
.
\end{equation}
This space is related to the fact that, according to \cite{tantrace} and  \cite[p. 34]{Giraultbook}, for all $v\in {\rm H}(\curl;\Omega)$  the tangential trace $n\times v\big|_{\partial \Omega}$  belongs to  a proper subspace of 
${\rm H}^{-\frac{1}{2}}(\partial\Omega)$ defined  by 
\begin{equation}
\bigchi_{\partial\Omega}:=\{v\in {\rm H}^{-\frac{1}{2}}(\partial\Omega)\mid\bigl\langle v\big|_{\partial\Omega}, n\bigr\rangle=0\,\,\mathrm{and}\,\,\dyw_{\tau} v\in {\rm H}^{-\frac{1}{2}}(\partial\Omega)\}
\end{equation}
and equipped with the norm 
\begin{equation}
 \lVert v \rVert_ {\bigchi_{\partial\Omega}}= \lVert v \rVert_ {{\rm H}^{-\frac{1}{2}}(\partial\Omega)}+ \lVert\dyw_{\tau} v \rVert_ {{\rm H}^{-\frac{1}{2}}(\partial\Omega)}\,,
\end{equation}
where $\dyw_{\tau} v$ is the tangential divergence of the vector $v$. Let us recall that the tangential divergence $\dyw_{\tau} v$ of the vector $v$ is the distribution in ${\rm H}^{-\frac{3}{2}}(\partial\Omega)$ which satisfies
	
	\begin{equation*}
	[[\dyw_{\tau} v,w]]_{\partial\Omega}=-\big[ v,(\nabla w^{\ast})\big|_{\partial\Omega}\big]_{\partial\Omega}\quad \forall\quad w\in {\rm H}^{\frac{3}{2}}(\partial\Omega)\,,
	\label{2.3}
	\end{equation*}
	where $w^{\ast}\in {\rm H}^{2}(\Omega)$ is any extension of $w$ in $\Omega$. Here, $\big[\cdot,\cdot\big]$ denotes the duality pair between the space ${\rm H}^{-\frac{1}{2}}(\partial\Omega)$ and ${\rm H}^{\frac{1}{2}}(\partial\Omega)$.  $[[\cdot,\cdot]]$ denotes the duality pair between the space ${\rm H}^{-\frac{3}{2}}(\partial\Omega)$ and ${\rm H}^{\frac{3}{2}}(\partial\Omega)$ (more information can be found in \cite{tandyw}).

We observe that $\tilde{\bigchi}_{\partial\Omega}\subset \bigchi_{\partial\Omega}$. In \cite{Sebastian1}, summarising some results presented in  \cite{tantrace} and \cite[Theorem 6 of Section 2]{electrobook}, we have concluded that the following results hold true.
\begin{tw}
	\label{lem:2.2}
	Assume that the boundary $\partial\Omega$ is of class ${\rm C}^{1,1}$  or that $\Omega$ is a convex polyhedron. Moreover, let us assume that $v\in {\bigchi}_{\partial\Omega}$. Then there exists an extension $\widetilde {v}\in {\rm H}(\curl;\Omega)$ of $v$ in $\Omega$ such that
		\begin{enumerate}
			\item[1.] $\curl\curl\widetilde {v}=0\,;$ \qquad \qquad 
			2. $\dyw\,\widetilde {v}=0$\,;
			\qquad \qquad  3.  $\widetilde {v}\in {\rm H}^1(\Omega)$ \ \ for all\ \  $v\in \widetilde{\chi}_{\partial\Omega}$.
		\end{enumerate}

\end{tw}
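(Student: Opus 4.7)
The strategy is to produce $\widetilde v$ by correcting an arbitrary lifting of the tangential trace, and then to upgrade to $H^1$ regularity via a classical embedding result.

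First, by the surjectivity of the tangential trace map from ${\rm H}(\curl;\Omega)$ onto $\bigchi_{\partial\Omega}$, see \cite{tantrace}, I would choose any $w\in {\rm H}(\curl;\Omega)$ with $n\times w|_{\partial\Omega}=v$. The correction will be sought in ${\rm H}_0(\curl;\Omega)$, so that the tangential trace of $\widetilde v:=w+\psi$ is preserved. To enforce items~1 and~2 simultaneously one can solve the mixed variational problem: find $\psi\in {\rm H}_0(\curl;\Omega)\cap {\rm H}(\dyw;\Omega)$ with
\begin{equation*}
\int_\Omega\big(\curl(w+\psi)\cdot\curl\varphi+\dyw(w+\psi)\,\dyw\varphi\big)\,dx=0\quad \text{for all }\varphi\in {\rm H}_0(\curl;\Omega)\cap {\rm H}(\dyw;\Omega),
\end{equation*}
which is well-posed by Lax--Milgram via the compact embedding of ${\rm H}_0(\curl;\Omega)\cap {\rm H}(\dyw;\Omega)$ into $L^2(\Omega)$ available on $C^{1,1}$ domains. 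Testing separately against tangentially vanishing gradients and against curl-potentials yields $\dyw\,\widetilde v=0$ and $\curl\curl\widetilde v=0$ in $L^2(\Omega)$, which are items~1 and~2.

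For item~3, when $v\in\widetilde{\bigchi}_{\partial\Omega}\subset H^{1/2}(\partial\Omega)$, I would invoke the classical Amrouche--Bernardi--Dauge--Girault embedding (see \cite[Ch.~I, Thm.~3.9]{Giraultbook} and \cite{electrobook}): on a $C^{1,1}$ domain or a convex polyhedron, every $u\in L^2(\Omega)$ with $\curl u\in L^2(\Omega)$, $\dyw\,u\in L^2(\Omega)$ and $n\times u\in H^{1/2}(\partial\Omega)$ already belongs to $H^1(\Omega)$, with continuous estimate. Since $\widetilde v$ satisfies $\curl\widetilde v\in L^2(\Omega)$, $\dyw\,\widetilde v=0$ and $n\times\widetilde v=v\in H^{1/2}(\partial\Omega)$, the conclusion $\widetilde v\in H^1(\Omega)$ follows at once.

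The main technical difficulty lies in the variational step: the bilinear form is not coercive on the whole space ${\rm H}_0(\curl;\Omega)\cap {\rm H}(\dyw;\Omega)$ because of the possible presence of harmonic Dirichlet vector fields, so one must either quotient them out or verify — as is permitted here by connectedness of $\Omega$ and smoothness of $\partial\Omega$ — that this space is trivial. An equivalent, more elementary route would be to decouple the two conditions and handle them sequentially: first kill $\curl\curl$ by a correction in ${\rm H}_0(\curl;\Omega)$ solving a pure curl-curl variational equation, and then remove the residual divergence by adding $\nabla\phi$ with $\phi\in {\rm H}^1_0(\Omega)$ solving $\Delta\phi=-\dyw\,\widetilde v_1$, which preserves both the tangential trace (because $\phi$ vanishes on $\partial\Omega$) and the curl (gradients being curl-free), thereby side-stepping the coercivity question altogether.
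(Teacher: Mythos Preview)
The paper does not supply its own proof of this theorem; it is quoted from the authors' earlier work \cite{Sebastian1}, which in turn assembles the extension construction of Alonso--Valli \cite{tantrace} with the $H^1$-regularity statement from \cite[Theorem~6, Section~2]{electrobook}. So there is no in-text argument to compare against directly, and your use of the embedding ``$\curl u,\dyw u\in L^2$, $n\times u\in H^{1/2}$ $\Rightarrow$ $u\in H^1$'' for item~3 is exactly the tool the cited references provide.

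There is, however, a genuine gap in your first variational formulation. You pick an arbitrary lifting $w\in {\rm H}(\curl;\Omega)$ and then write $\int_\Omega\dyw(w+\psi)\,\dyw\varphi\,dx$, but nothing guarantees $\dyw\,w\in L^2(\Omega)$: a generic element of ${\rm H}(\curl;\Omega)$ has divergence only in $H^{-1}(\Omega)$, so the mixed problem as stated is not well-defined. Your own ``decoupled'' alternative avoids this and does work, provided you handle the gauge freedom in the curl--curl step (solve on the divergence-free subspace of ${\rm H}_0(\curl;\Omega)$ or on the quotient by $\nabla H^1_0(\Omega)$, and account for harmonic Dirichlet fields if $\Omega$ is multiply connected). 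The subsequent Poisson correction $\Delta\phi=-\dyw\,\widetilde v_1$ is then well-posed with right-hand side in $H^{-1}(\Omega)$, and your observation that $\nabla\phi$ with $\phi\in H^1_0(\Omega)$ preserves both the tangential trace and the vanishing of $\curl\curl$ is correct.

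For comparison, the Alonso--Valli construction that the paper invokes is more direct than ``lift and correct'': one first determines $\curl\widetilde v$ explicitly (it must be a harmonic gradient $\nabla p$ with Neumann datum $\partial_n p=\dyw_\tau v$, since $n\cdot\curl\widetilde v=\dyw_\tau v$ on $\partial\Omega$ and $\curl\curl\widetilde v=0$), and then recovers a divergence-free $\widetilde v$ with this prescribed curl and tangential trace. That route sidesteps the coercivity and gauge issues entirely, at the cost of invoking slightly more structure on the boundary datum.
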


\begin{col} 	\label{collem}
The construction of the extension operator defined in the proof of Theorem \ref{lem:2.2} yields that $\nabla\curl \widetilde {v}\in {\rm L}^2(\Omega)$ (we refer to \cite{Sebastian1} for more details).
\end{col}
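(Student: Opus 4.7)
The plan is to set $w := \curl \widetilde{v}$ and to show $w \in {\rm H}^1(\Omega)$, which is exactly the stated conclusion $\nabla \curl \widetilde{v} \in {\rm L}^2(\Omega)$. First, I would collect what Theorem~\ref{lem:2.2} already provides: the identity $\curl \curl \widetilde{v} = 0$ gives $\curl w = 0$, and the algebraic identity $\dyw \circ \curl \equiv 0$ yields $\dyw w = 0$ automatically (this is independent of the specific construction). Since $\widetilde{v} \in {\rm H}(\curl;\Omega)$, one has $w \in {\rm L}^2(\Omega)$. Thus $w$ is a divergence-free, curl-free ${\rm L}^2$-vector field, and what remains in order to promote it to ${\rm H}^1(\Omega)$ is an appropriate boundary condition inherited from the construction.

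The source of this boundary information is the explicit form of the extension built in the proof of Theorem~\ref{lem:2.2}: $\widetilde{v}$ is produced by solving an auxiliary elliptic problem whose data come from $v \in \widetilde{\chi}_{\partial\Omega} \subset {\rm H}^{\frac{1}{2}}(\partial\Omega)$, which is a half derivative better than the generic $\bigchi_{\partial\Omega}$-regularity available for arbitrary fields in ${\rm H}(\curl;\Omega)$. Tracing carefully through the construction of \cite{Sebastian1}, this additional half derivative of regularity on $v$ propagates through the auxiliary potential problem and yields a boundary trace for $w$ of ${\rm H}^{\frac{1}{2}}$-regularity, either on its tangential part $w \times n|_{\partial\Omega}$ or on its normal component $\langle w, n \rangle|_{\partial\Omega}$, depending on which potential is used in the construction.

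With $w \in {\rm L}^2(\Omega)$, $\curl w = 0$, $\dyw w = 0$, a trace on $\partial\Omega$ of ${\rm H}^{\frac{1}{2}}$-regularity, and $\partial\Omega$ of class ${\rm C}^{1,1}$, one invokes the classical embedding theorem for the div-curl system on regular domains (of the same type as the ones cited in Theorem~\ref{lem:2.2}, cf.\ \cite{tantrace,Giraultbook}): any such vector field lies in ${\rm H}^1(\Omega)$. This yields $w = \curl \widetilde{v} \in {\rm H}^1(\Omega)$, i.e., $\nabla \curl \widetilde{v} \in {\rm L}^2(\Omega)$.

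The genuine obstacle is the middle step. The three abstract properties listed in Theorem~\ref{lem:2.2} do not by themselves pin down the trace regularity of $\curl \widetilde{v}$ on $\partial\Omega$, so one really has to unpack the explicit extension operator rather than treat it as a black box. A secondary technical issue is the bookkeeping between tangential divergence on $\partial\Omega$ and the tangential/normal trace of $\curl \widetilde{v}$, including compatibility with the condition $\langle v, n \rangle = 0$ that distinguishes $\widetilde{\chi}_{\partial\Omega}$ from $\bigchi_{\partial\Omega}$. This is precisely why Corollary~\ref{collem} is stated as a property of the construction rather than as a consequence of the abstract statement of Theorem~\ref{lem:2.2}.
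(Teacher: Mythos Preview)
The paper does not actually prove this corollary; it simply states the conclusion and refers the reader to \cite{Sebastian1} for the details of the construction. So there is no in-paper argument to compare against directly. Your sketch---reducing to the div--curl system for $w=\curl\widetilde{v}$ (with $\curl w=0$, $\dyw w=0$, $w\in{\rm L}^2(\Omega)$), then extracting a boundary condition of ${\rm H}^{1/2}$-type from the explicit Alonso--Valli extension, and finally invoking a Gaffney-type embedding on a ${\rm C}^{1,1}$ domain---is the natural route and is entirely consistent with how the present paper argues the analogous point in the homogeneous case just before Theorem~\ref{tw:regu}: there, $P_i\in{\rm H}_0(\curl;\Omega)$ forces $\curl P_i\in{\rm H}_0(\dyw;\Omega)$, and inequality~\eqref{ineq2} (Gaffney) yields $\nabla\curl P_i\in{\rm L}^2(\Omega)$.

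Your own caveat is the right one: the only substantive step is identifying \emph{which} boundary trace of $w$ the construction controls, and at what regularity. You leave this deliberately open (``either $w\times n$ or $\langle w,n\rangle$, depending on which potential is used''), which is honest but means the proposal is a strategy rather than a proof. To close the argument you must go back to the actual auxiliary problem solved in \cite{Sebastian1}/\cite{tantrace} and read off the trace. Since the corollary is explicitly phrased as a property of \emph{that} construction, this is unavoidable, and your final paragraph correctly flags it.
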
 

Using these results, we have proven the existence and uniqueness of the solution of the initial-boundary value problem arising in the linear relaxed theory for non-homogeneous boundary conditions, see \cite{Sebastian1}. 
\begin{tw} {\rm (Existence of solution with non-homogeneous boundary conditions)}\label{existenceresult}
	Let us assume that the constitutive parameters satisfy  \eqref{condpara} and the initial data are such that
\begin{equation}
(u^{(0)}, u^{(1)}, P^{(0)}, P^{(1)})\in {\rm H}^1(\Omega; \R^3 )\times {\rm H}^1(\Omega; \R^3 )\times {\rm H}(\Curl;\Omega )\times {\rm H}(\Curl;\Omega )\,
\label{2.27}
\end{equation}
and that the compatibility condition \eqref{comcon} holds.
Additionally,
\begin{equation}
\begin{split}
\Dyw\big(2\,\mu_{\rm e}\sym(\nabla u^{(0)}-P^{(0)})+2\,\mu_{\rm c}\skyw(\nabla u^{(0)}-P^{(0)})+\lambda_{\rm e}\tr(\nabla u^{(0)}-P^{(0)})\id\big)&\in {\rm L}^2(\Omega)\,,
\\
\Curl\Curl P^{(0)}&\in {\rm L}^2(\Omega)
\end{split}
\end{equation}
and
$
f\in {\rm C}^1([0,T];{\rm L}^2(\Omega))\,,\  M\in {\rm C}^1([0,T];{\rm L}^2(\Omega))$, $
g\in {\rm C}^3([0,T];{\rm H}^{\frac{3}{2}}(\partial\Omega))\,,\  G_i\in {\rm C}^3([0,T];\tilde{\chi}_{\partial\Omega})\,,\,\, i=1,2,3\,.
$
Then, the system \eqref{1.2} with boundary conditions \eqref{1.3} and initial conditions \eqref{1.4} possesses a global in time, unique solution $(u, P)$ with the regularity: for all times $T>0$ 
\begin{equation}
\begin{split}
u\in {\rm C}^1([0,T];{\rm H}^1(\Omega ))\,,&\  u_{,tt}\in {\rm C}([0,T];{\rm L}^2(\Omega ))\,,\ 
P\in {\rm C}^1([0,T]; {\rm H}(\Curl;\Omega )),\  P_{,tt}\in {\rm C}([0,T];{\rm L}^2(\Omega))
\end{split}
\label{2.32}
\end{equation}
Moreover,
\begin{equation}
\Dyw\big(2\,\mu_{\rm e}\sym(\nabla u-P)+2\,\mu_{\rm c}\skyw(\nabla u-P)+\lambda_{\rm e}\tr(\nabla u-P)\id\big)\in {\rm C}([0,T];{\rm L}^2(\Omega  ))\\[1ex]
\label{2.33}
\end{equation}
and 
\begin{equation}
\Curl\Curl P\in {\rm C}([0,T];{\rm L}^2(\Omega  ))\,.
\label{2.34}
\end{equation}
\label{tw:nonhomo}
\end{tw}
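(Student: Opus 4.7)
The plan is to reduce the problem to one with homogeneous boundary conditions via a suitable lifting of the boundary data, and then apply a Galerkin approximation scheme in the appropriate Hilbert spaces. First, for the Dirichlet condition $u|_{\partial\Omega}=g$, standard trace theory supplies an extension $\widetilde{g}$ preserving the temporal regularity of $g$ and belonging slicewise to ${\rm H}^2(\Omega)$. For the tangential condition $P_i\times n=G_i$, I would apply Theorem \ref{lem:2.2} and Corollary \ref{collem} time-slice by time-slice to obtain an extension $\widetilde{G}(\cdot,t)\in {\rm H}^1(\Omega;\R^{3\times 3})$ with $\Curl\Curl\widetilde{G}(\cdot,t)=0$; linearity of the extension construction transfers the temporal regularity of each $G_i$ to $\widetilde{G}$.

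Next, I would set $v:=u-\widetilde{g}$ and $Q:=P-\widetilde{G}$. The pair $(v,Q)$ satisfies a system of the same form as \eqref{1.2} but with homogeneous boundary conditions and modified sources $\widetilde{f},\widetilde{M}\in {\rm C}^1([0,T];{\rm L}^2(\Omega))$; crucially, the only contribution from the curvature sector in $\widetilde{M}$ coming from the lift, namely $-\mu_{\rm micro}L_{\rm c}^2\Curl\Curl\widetilde{G}$, vanishes. The compatibility condition \eqref{comcon} guarantees that $v$ and $Q$ have zero traces at $t=0$, so the shifted initial data lie in ${\rm H}^1_0(\Omega)$ and ${\rm H}_0(\Curl;\Omega)$ respectively. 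For the resulting homogeneous problem I would use a Galerkin scheme with finite-dimensional subspaces of ${\rm H}^1_0(\Omega;\R^3)$ and ${\rm H}_0(\Curl;\Omega;\R^{3\times 3})$. Testing with $(v_{,t},Q_{,t})$ produces the natural energy identity: the sum of the two kinetic terms $\tfrac{1}{2}\lVert v_{,t}\rVert^2+\tfrac{1}{2}\lVert Q_{,t}\rVert^2$ and of the elastic/curvature terms built from $\sym(\nabla v-Q)$, $\skyw(\nabla v-Q)$, $\tr(\nabla v-Q)$, $\sym Q$, $\tr Q$, and $\lVert\Curl Q\rVert^2$ is controlled by the data via Gr\"onwall's inequality, thanks to \eqref{condpara} which yields coercivity on the energy space. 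Passing to the limit, one obtains $v\in {\rm C}^1([0,T];{\rm H}^1_0(\Omega))$ and $Q\in {\rm C}^1([0,T];{\rm H}_0(\Curl;\Omega))$, then undoing the shift gives \eqref{2.32}.

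To upgrade to $v_{,tt},Q_{,tt}\in {\rm C}([0,T];{\rm L}^2(\Omega))$ and to obtain \eqref{2.33}--\eqref{2.34}, I would differentiate the Galerkin equations once in time and test with $(v_{,tt},Q_{,tt})$, exploiting the hypothesis that $\Dyw(\cdots)|_{t=0}$ and $\Curl\Curl P^{(0)}$ lie in ${\rm L}^2(\Omega)$ to bound the initial contribution of the second-order energy. Reading $\Dyw(\cdots)$ and $\Curl\Curl P$ off from the equations as ${\rm L}^2$-valued continuous functions of time then yields \eqref{2.33} and \eqref{2.34}. Uniqueness follows from the basic energy identity applied to the difference of two solutions, which has vanishing data and initial conditions.

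The main obstacle is making the lift of the tangential trace compatible with an ${\rm L}^2$ energy estimate for $P$: a generic ${\rm H}(\Curl)$-extension of the tangential data $G_i$ would introduce a distributional term $-\mu_{\rm micro}L_{\rm c}^2\Curl\Curl\widetilde{G}$ into $\widetilde{M}$ that need not belong to ${\rm L}^2$, destroying the Galerkin energy argument. This is precisely where Theorem \ref{lem:2.2} intervenes: its extension with vanishing $\Curl\Curl$ and ${\rm H}^1$ regularity converts an otherwise intractable source into the identically zero source, and the remaining estimates then follow classical patterns for linear hyperbolic systems.
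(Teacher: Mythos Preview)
The paper does not actually prove this theorem in the present text; it is quoted as a preliminary result established in the authors' earlier work \cite{Sebastian1}, with only the surrounding discussion indicating the mechanism. Your sketch matches that mechanism precisely: lift the inhomogeneous tangential data via the special extension of Theorem~\ref{lem:2.2} so that the $\Curl\Curl\widetilde G$ contribution to the shifted body moment vanishes identically, reduce to the homogeneous problem, and then run standard linear hyperbolic existence theory together with a time-differentiated energy estimate to reach \eqref{2.32}--\eqref{2.34}. Your diagnosis of the main obstacle---that a generic ${\rm H}(\Curl)$-lifting would inject a merely distributional $\Curl\Curl$-term into the source and wreck the ${\rm L}^2$ energy argument---is exactly why Theorem~\ref{lem:2.2} and Corollary~\ref{collem} were isolated in \cite{Sebastian1}, and you invoke them at the right place.

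Two minor remarks. First, whether one handles the homogeneous problem by a Galerkin scheme or by semigroup methods (as in \cite{GhibaNeffExistence}) is immaterial for a linear system of this type; both deliver the same regularity once the data are lifted correctly, so your choice is fine. Second, the step ``linearity of the extension construction transfers the temporal regularity of each $G_i$ to $\widetilde G$'' tacitly uses that the extension operator of Theorem~\ref{lem:2.2} is bounded from $\tilde\chi_{\partial\Omega}$ into ${\rm H}^1(\Omega)$; this is indeed part of the construction in \cite{tantrace,Sebastian1}, but it is worth stating explicitly since the ${\rm C}^3$-in-time hypothesis on $G_i$ is consumed precisely here to guarantee $\widetilde G_{,ttt}\in {\rm C}([0,T];{\rm L}^2(\Omega))$ and hence $\widetilde M\in {\rm C}^1([0,T];{\rm L}^2(\Omega))$.
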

\begin{col} 	\label{collem1}
Theorem \ref{tw:nonhomo} assumes that $G_i\in {\rm C}^3([0,T];\tilde{\chi}_{\partial\Omega})$ for $i=1,2,3$, therefore the Theorem \ref{lem:2.2} and Corollary \ref{collem} yield that there exists an extension $\tilde{G}_i\in {\rm C}^3([0,T]; {\rm H}(\curl;\Omega ))$ such that $n\times \tilde{G}_i=G_i$ on $\partial\Omega$ in the sense of ${\rm H}^{-\frac{1}{2}}(\partial\Omega)$, $\curl\curl  \tilde{G}_i=0$ and $\nabla \tilde{G}_i\in {\rm C}^3([0,T]; {\rm H}^1(\Omega ))$.
\end{col}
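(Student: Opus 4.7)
The plan is to combine the pointwise-in-time construction from Theorem \ref{lem:2.2} and Corollary \ref{collem} with the linearity of that construction, transferring the ${\rm C}^3$-in-time regularity of $G_i$ to its extension $\tilde G_i$ by the standard rule that a bounded linear operator applied pointwise to a ${\rm C}^k$-in-time map yields a ${\rm C}^k$-in-time map into the target space.

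First I would verify that the extension constructed for Theorem \ref{lem:2.2} in \cite{Sebastian1} depends linearly and continuously on its datum $v\in\tilde{\bigchi}_{\partial\Omega}$. Inspection of that construction shows it reduces to the solution of auxiliary elliptic boundary-value problems whose data is linear in $v$; consequently the resulting extension operator $E:\tilde{\bigchi}_{\partial\Omega}\to {\rm H}(\curl;\Omega)$, $E(v):=\tilde v$, is a bounded linear operator, and by part 3 of Theorem \ref{lem:2.2} together with Corollary \ref{collem} the stronger maps $E:\tilde{\bigchi}_{\partial\Omega}\to {\rm H}^1(\Omega)$ and $v\mapsto \nabla\curl E(v)\in {\rm L}^2(\Omega)$ are bounded as well. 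For every $v$ in the domain the identities $\curl\curl E(v)=0$ and $n\times E(v)=v$ on $\partial\Omega$ hold in the sense of ${\rm H}^{-\frac{1}{2}}(\partial\Omega)$.

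Next, for each $i\in\{1,2,3\}$ I set $\tilde G_i(\cdot,t):=E\bigl(G_i(\cdot,t)\bigr)$. Because $G_i\in {\rm C}^3([0,T];\tilde{\bigchi}_{\partial\Omega})$ and $E$ is linear and bounded with respect to each of the target norms listed above, composition gives $\tilde G_i\in {\rm C}^3([0,T];{\rm H}(\curl;\Omega))\cap {\rm C}^3([0,T];{\rm H}^1(\Omega))$ together with $\partial_t^k\tilde G_i(\cdot,t)=E(\partial_t^k G_i(\cdot,t))$ for $0\leq k\leq 3$, by the elementary fact that a bounded linear map preserves differentiability and continuity of Banach-space-valued curves. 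The pointwise-in-$t$ identities $\curl\curl\tilde G_i(\cdot,t)=0$ and $n\times\tilde G_i(\cdot,t)=G_i(\cdot,t)$ then follow for every $t\in[0,T]$ by applying the corresponding properties of $E$ to $v=G_i(\cdot,t)$, while the claimed regularity of $\nabla\tilde G_i$ is read off from the ${\rm H}^1$-boundedness of $E$ combined with Corollary \ref{collem}.

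I do not anticipate a serious obstacle: the argument is essentially bookkeeping once one has identified the extension as a bounded linear operator between the relevant Banach spaces. The only non-cosmetic step is verifying this linearity and continuity directly from the construction recorded in \cite{Sebastian1}, which is immediate given the elliptic nature of the auxiliary problems involved.
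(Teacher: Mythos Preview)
Your argument is correct and is exactly the mechanism the paper has in mind; in fact the paper states this corollary without any proof, treating it as an immediate consequence of Theorem~\ref{lem:2.2} and Corollary~\ref{collem}, so you have simply supplied the (standard) details the authors left implicit. The one substantive step you correctly isolate---that the extension $E$ built in \cite{Sebastian1} is a bounded linear operator into the relevant target spaces, so that ${\rm C}^3$-regularity in time transfers under pointwise composition---is precisely what is needed.
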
 
The assumption on the constitutive parameters was used in the proof since we need to know that there exists a constant $C>0$ such that
\begin{align}
{\rm C}( \lVert\nabla u\rVert^2_{{\rm L}^2(\Omega)}+ \lVert P\rVert^2_{{\rm H}(\Curl;\Omega)})\leq \int_{\Omega}\Big(&\mu_{\rm e} \lVert\sym(\nabla u-P)\rVert^2+\mu_{\rm c} \lVert {\rm skew}(\nabla u-P)\rVert^2+\frac{\lambda_{\rm e}}{2}[\tr(\nabla u-P)]^2\\
&+ \mu_{\mathrm{micro}}  \lVert\sym P\rVert^2+\frac{\lambda_{\mathrm{micro}}}{2}\,[\tr (P)]^2+\frac{\mu_{\rm micro} \,L_{\rm c}^2}{2} \,\lVert\Curl P\rVert^2\Big)\,\di x\,\notag
\end{align}
for all $u\in {\rm H}^1_0(\Omega)$ and $P\in{\rm H}_0({\rm Curl}\, ; \Omega)$. 
This coercivity follows even when $\mu_{\rm c}=0$, due to the fact that 
\cite{NeffPaulyWitsch,BNPS2,LMN}
there exists a positive constant $C$, only depending on $\Omega$, such that for all $P\in{\rm H}_0({\rm Curl}\, ; \Omega)$ the following estimate holds
	\begin{align}
	 	{ \lVert  P \rVert_ {{\rm H}(\mathrm{Curl})}^2}:= \lVert  P \rVert_ {{\rm L}^2(\Omega)}^{ {2}}+ \lVert  \Curl P \rVert_ {{\rm L}^2(\Omega)}^{ {2}}&\leq C\,( \lVert  {\rm sym} P\rVert^2_{{\rm L}^2(\Omega)}+ \lVert  \Curl P\rVert^2_{{\rm L}^2(\Omega)}). 
	\end{align}

Let us adjoin the total energy to a solution of the initial-value problem 
\begin{align}\label{1.5}
\E(u, P)(t)=&\frac{1}{2}\int_{\Omega}( \lVert u_{,t}\rVert^2+ \lVert P_{,t}\rVert^2)\,\di x + \int_{\Omega}\Big(\mu_{\rm e} \lVert \sym(\nabla u-P)\rVert^2+\frac{\lambda_{\rm e}}{2}\,[\tr(\nabla u-P)]^2
+\mu_{\rm c}\, \lVert \skyw(\nabla u-P)\rVert^2
\notag\\[1ex]
&\qquad \quad \qquad \quad \qquad \quad\qquad  \  +\mu_{\rm micro} \lVert \sym P\rVert^2+\frac{\lambda_{\mathrm{micro}}}{2}\,[\tr (P)]^2
+\frac{\mu_{\rm micro}\, L_{\rm c}^2}{2}\, \lVert \Curl P\rVert^2\Big)\,\di x\,.
\end{align}

\section{Higher local regularity }
\renewcommand{\theequation}{\thesection.\arabic{equation}}
\setcounter{equation}{0}%
The aim of this section is to show the higher local regularity of the solution of problem \eqref{1.2}. We will use the difference quotient method. Let us consider bounded open subsets of $\Omega$ with smooth boundaries (so we can use Korn's inequality) such that $V\Subset U\Subset\Omega$. We consider  a cutoff function $\eta\,:\R^3\rightarrow [0,1]$ with the following properties
\begin{equation}
\eta\,:\Omega\rightarrow [0,1]\,,\qquad  \eta\,\in C_0^{\infty}(\R^3)\,,\qquad 
\eta\,=1\quad \mathrm{on}\,\, V  \quad \mathrm{and}\quad \eta\,=0\quad \mathrm{on}\,\, \Omega\setminus U\,. 
\label{4.1}
\end{equation}
We will denote by $D^h_k$ the difference quotient in the direction $\vec{e}_k$ with the step $h$ i.e. for any function $\phi$ defined on $\Omega$, any $h\in\R$ sufficiently small ($k=1,2,3$), set
\begin{equation}
\begin{split}
D^h_k\phi(x):=\frac{\phi(x+h\vec{e}_k)-\phi(x)}{h}\,. 
\end{split}
\label{4.2}
\end{equation}
Observe that for $0<|h|<\frac{1}{2}\mathrm{dist}(U,\partial\Omega)$ and $x\in U$ the difference quotient is well-defined. Moreover, the products $\eta\, D^h_k(\cdot)$ are equal to zero for $x\notin U$. Let us recall  Theorem 3 from  Section 5.8.2 of \cite{evansbook} on the relation between the difference quotient and weak derivatives.
\begin{tw}
		\begin{description}
			\item[]
			\item[(i)] Assume that $1\leq p < \infty$ and $\phi\in W^{1,p}(\Omega)$. Then for all 
			$k=1,2,3$ and all $V\Subset U\Subset\Omega$ it holds 
			\begin{equation}
			\begin{split}
			 \lVert D^h_k\phi \rVert_ {L^{p}(V)}\leq C\, \lVert \nabla\phi \rVert_ {L^{p}(U)}
			\end{split}
			\label{difguo1}
			\end{equation}
			for some constant $C={\rm C}(p,U)$ and all $0<|h|<\frac{1}{2}\mathrm{dist}( V,\partial U)$.
			\item[(ii)] In turn, if $1<p<\infty$, $\phi\in L^p(\Omega)$ and there exists constant $C>0$ such that for all $k=1,2,3$ and all $0<|h|<\frac{1}{2}\mathrm{dist}( V,\partial U)$ the following inequality 
			\begin{equation}
			\begin{split}
			 \lVert D^h_k\phi \rVert_ {L^{p}(V)}\leq C
			\end{split}
			\label{difguo2}
			\end{equation}
			is fulfilled. Then, 
			\begin{equation}
			\phi\in W^{1,p}(V)\quad\mathrm{and}\quad  \lVert \nabla\phi \rVert_ {L^{p}(V)}\leq C\,.
			\label{difguo3}
			\end{equation}
		\end{description}
	\label{tw:difquo}
\end{tw}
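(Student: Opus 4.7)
The plan is to dispatch the two parts of this classical characterization (Evans, Section~5.8.2) by standard techniques; I assume the $W^{1,p}$ theory and the duality between weak and strong convergence in $L^p$.

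For Part (i), I would first reduce to smooth $\phi\in C^1(\overline U)\cap W^{1,p}(\Omega)$ via density of smooth functions in $W^{1,p}$. For such $\phi$ the fundamental theorem of calculus gives
\[
D_k^h\phi(x)=\frac{1}{h}\int_0^h \partial_k\phi(x+t\,\vec{e}_k)\,dt,
\]
to which I would apply Jensen's (equivalently, H\"older's) inequality pointwise to get $|D_k^h\phi(x)|^p\leq \frac{1}{h}\int_0^h|\partial_k\phi(x+t\,\vec{e}_k)|^p\,dt$, and then integrate over $V$. A Fubini swap, together with the observation that $x+t\,\vec{e}_k\in U$ whenever $x\in V$ and $|t|\leq|h|<\tfrac{1}{2}\mathrm{dist}(V,\partial U)$, collapses to $\|D_k^h\phi\|_{L^p(V)}\leq\|\partial_k\phi\|_{L^p(U)}$ with $C=1$. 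A density argument then extends the bound to all $\phi\in W^{1,p}(\Omega)$.

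For Part (ii) the assumption $p>1$ is essential because the argument relies on the reflexivity of $L^p(V)$. From the uniform bound $\|D_k^h\phi\|_{L^p(V)}\leq C$, Banach--Alaoglu produces a sequence $h_n\to 0$ and some $\phi_k\in L^p(V)$ with $\|\phi_k\|_{L^p(V)}\leq C$ such that $D_k^{h_n}\phi\rightharpoonup\phi_k$ weakly in $L^p(V)$. The remaining task is to identify $\phi_k$ as the weak partial derivative $\partial_k\phi$ on $V$. For this I would use the discrete integration-by-parts identity
\[
\int_V\psi\,D_k^h\phi\,dx=-\int_V\phi\,D_k^{-h}\psi\,dx,\qquad \psi\in C_c^\infty(V),
\]
obtained by a translation change of variables and valid for $|h|$ small enough that the shifted supports remain inside $V$. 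Passing to the limit along $h_n$: the left-hand side tends to $\int_V\psi\,\phi_k\,dx$ by weak convergence, while the right-hand side tends to $-\int_V\phi\,\partial_k\psi\,dx$ because $D_k^{-h_n}\psi\to\partial_k\psi$ uniformly (hence strongly in $L^{p'}(V)$) for smooth $\psi$. This identifies $\phi_k=\partial_k\phi$ in the weak sense on $V$, giving $\phi\in W^{1,p}(V)$ with the claimed estimate.

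The most delicate step is the identification in Part (ii): one has to combine a merely weak limit of $D_k^{h_n}\phi$ with the strong limit of the test-function difference quotient, and the discrete integration-by-parts identity is precisely the bridge that couples them. The restriction $p>1$ is not a technicality but a genuine obstruction: without reflexivity one only extracts a bounded family of measures, not an $L^1$ weak derivative.
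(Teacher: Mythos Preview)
Your proof is correct and is essentially the standard argument from Evans, Section~5.8.2. Note, however, that the paper does not supply its own proof of this theorem at all: it merely recalls the statement from \cite{evansbook} as a known tool, so there is nothing in the paper to compare your argument against beyond the citation itself.
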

\noindent
In the proof of the main estimate we will need the following information about tensor $P$
\begin{equation}
\begin{split}
 \lVert \nabla\curl P_i \rVert_ {{\rm C}([0,T];{\rm L}^2(\Omega))}\leq C\,,
\end{split}
\label{ineq}
\end{equation}
where $i=1,2,3$, $
P=\left(\begin{array}{c}
P_1
\,|\,
P_2
\,|\,
P_3
\end{array}\right)^T
$ is the unique solution of the problem \eqref{1.2}-\eqref{1.4} and the constant $C>0$ depends on $\Omega$ ($\Curl P$ is calculated with respect to rows of the matrix $P$). Observe that it is sufficient to prove the inequality \eqref{ineq} with homogeneous tangential boundary condition $P_i\times n =0$ on $\partial\Omega$ since the estimates in the case of inhomogeneous tangential boundary condition  follows as a consequence of Corollary \ref{collem1}.

\noindent
Assuming that $P_i\times n=0$ on $\partial\Omega$ we obtain from Theorem \ref{existenceresult} that $P_i\in {\rm C}([0,T];{\rm H}_0(\curl;\Omega))$, which means, that $P_i\in {\rm C}([0,T];{\rm H}(\curl;\Omega))$ and it has vanishing tangential component on the boundary $\partial\Omega$. Using ${\rm C}_0^{\infty}(\Omega)$- vector fields, one can show by a standard closure procedure that $\curl P_i\in {\rm C}([0,T];{\rm H}_0(\mathrm{div};\Omega))$ and $\mathrm{div}\curl P_i=0$ ($\curl P_i$ has vanishing normal component on the boundary $\partial\Omega$)-see for example \cite[Proposition 6.1.5]{Seifert}. Now the inequality \eqref{ineq} is a consequence of the inequality know in the literature as  Gaffney's inequality: for $v\in {\rm H}(\curl;\Omega)\cap {\rm H}_0(\dyw;\Omega)$ the following inequality  
\begin{equation}
\begin{split}
 \lVert \nabla v \rVert_ {{\rm L}^2(\Omega)}\leq {\rm C}\,( \lVert \curl v \rVert_ {{\rm L}^2(\Omega)}+ \lVert \dyw v \rVert_ {{\rm L}^2(\Omega)}+ \lVert v \rVert_ {{\rm L}^2(\Omega)})\,,
\end{split}
\label{ineq2}
\end{equation}
is satisfied, where the constant $C>0$ does not depend on $v$. Some standard references on \eqref{ineq2} are Amrouche-Bernardi-Dauge-Girault \cite{Dauge}, Costabel \cite{Costabel}, Dautray-Lions \cite{Lions}, and Grisvard \cite[p.~318]{electrobook} (see also \cite{Costabel90} and \cite{Costabel99}). The regularity \eqref{2.34} entails that $\curl P_i\in {\rm H}(\curl;\Omega)\cap {\rm H}_0(\dyw;\Omega)$, hence applying inequality \eqref{ineq2} with $v=\curl P_i$ we obtain the inequality \eqref{ineq}. Now we are ready to prove the main estimate of this article.
\begin{tw} {\rm (Main estimate) } 
	Suppose that  $(u,P)$ is the solution of the problem \eqref{1.2} with $P_i(x,t)\times n(x)=0$ on $\partial \Omega$ and under the hypotheses  of Theorem \ref{existenceresult}. 
	Moreover, assume that the given forces have the regularity
	$
	f\in {\rm L}^2(0,T;{\rm H}^1_{\rm loc}(\Omega))$, $ M\in {\rm L}^2(0,T;{\rm H}^1_{\rm loc}(\Omega))
	$ for all $T>0$
	and the initial data admit the regularity
	$
	\nabla u^{(0)}-P^{(0)}\in {\rm H}^1_{\rm loc}(\Omega)\,,  \sym P^{(0)}\in {\rm H}^1_{\rm loc}(\Omega)\,,$ $ 
	\tr P^{(0)}\in {\rm H}^1_{\rm loc}(\Omega)\,, $ $ \Curl P^{(0)}\in {\rm H}^1_{\rm loc}(\Omega)\,, $ $
	u^{(1)}\in {\rm H}^1_{\rm loc}(\Omega;\R^{3})\,$ $  \mathrm{and}\quad P^{(1)}\in {\rm H}^1_{\rm loc}(\Omega)\,.
	$
	Then, for all $k\in\{1,2,3\}$, $t\in [0,T]$ and sufficiently small $h\in\R$ the following inequality
	\begin{equation}
	\E(\eta\, D^h_k u, \eta\, D^h_k P)(t)\leq C
	\label{estimate}
	\end{equation}
	holds, where $(u,P)$ is the solution of the system \eqref{1.2}, $\eta$ is the function defined by \eqref{4.1} and the constant $C>0$ does not depend on $h$ or on $\eta$.
	\label{tw:regu}
\end{tw}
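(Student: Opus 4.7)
The plan is to apply the difference quotient $D^h_k$ to both equations of \eqref{1.2}, exploiting that the constant-coefficient spatial operators $\Dyw$, $\Curl$, $\sym$, $\skyw$, $\tr$ commute with $D^h_k$. For $|h|<\tfrac{1}{2}\operatorname{dist}(U,\partial\Omega)$, the functions $\eta^2 D^h_k u_{,t}$ and $\eta^2 D^h_k P_{,t}$ are supported in $U\Subset\Omega$ and therefore admissible test functions in $H^1_0(\Omega;\R^3)$ and $H_0(\Curl;\Omega)$, respectively. I will test the two differenced equations against these test functions and add, with the goal of deriving an $h$-uniform differential inequality for an $\eta^2$-weighted energy $\widetilde{\E}(t)$ associated to $(D^h_k u, D^h_k P)$.

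\textbf{Energy identity.} After integrating by parts, the stress pairings combine by the symmetry of the elastic bilinear form and \eqref{condpara} into $-\tfrac{1}{2}\tfrac{d}{dt}\int \eta^2 \sigma(\nabla D^h_k u - D^h_k P):(\nabla D^h_k u - D^h_k P)$, modulo a commutator $-2\int \eta\,\sigma(\nabla D^h_k u - D^h_k P):(\nabla\eta\otimes D^h_k u_{,t})$ stemming from $\nabla\eta^2 = 2\eta\nabla\eta$. The $\sigma_{\rm micro}(D^h_k P)$-term produces its own $\eta^2$-weighted quadratic contribution without commutator, while the $\Curl\Curl$-term (after integration by parts, valid because $\eta^2 D^h_k P_{,t}\in H_0(\Curl;\Omega)$) yields $-\tfrac{\mu_{\rm micro}L_c^2}{2}\tfrac{d}{dt}\int \eta^2|\Curl D^h_k P|^2$ plus a second commutator $-2\mu_{\rm micro}L_c^2 \int \eta\,\Curl D^h_k P:(\nabla\eta\times D^h_k P_{,t})$ (row by row). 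Together with the kinetic contributions on the left, we arrive at
$$\tfrac{d}{dt}\widetilde{\E}(t) \;=\; \mathcal{R}(t),$$
where $\mathcal{R}(t)$ gathers the two commutators and the data integrals $\int\eta^2 D^h_k f\cdot D^h_k u_{,t}+\int\eta^2 D^h_k M:D^h_k P_{,t}$.

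\textbf{The main obstacle: the Curl-commutator.} This is the delicate piece, because the factor $D^h_k P_{,t}$ carries no a priori $h$-uniform $L^2$-bound. Young's inequality yields
$$\Big|2\mu_{\rm micro}L_c^2\int \eta\,\Curl D^h_k P:(\nabla\eta\times D^h_k P_{,t})\Big|\;\leq\;\varepsilon\|\eta D^h_k P_{,t}\|_{L^2}^2+C_\varepsilon\|\nabla\eta\|_\infty^2\|\Curl D^h_k P\|_{L^2(U)}^2.$$
For small $\varepsilon$ the first summand is absorbed into the kinetic part of $\widetilde{\E}(t)$. The second is bounded uniformly in $h$ precisely by the Gaffney-type inequality \eqref{ineq} combined with Theorem \ref{tw:difquo}(i):
$\|\Curl D^h_k P\|_{L^2(U)}=\|D^h_k\Curl P\|_{L^2(U)}\leq C\|\nabla\Curl P\|_{L^2(\Omega)}\leq C.$
The $\Dyw$-commutator is treated analogously, controlling $\|D^h_k\sigma(\nabla u-P)\|_{L^2(U)}$ through the PDE for $u_{,tt}$ and the global energy bound supplied by Theorem \ref{existenceresult}. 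The source integrals are handled by Young's inequality together with the estimate $\|D^h_k f\|_{L^2(U)}+\|D^h_k M\|_{L^2(U)}\leq C(\|\nabla f\|_{L^2(U')}+\|\nabla M\|_{L^2(U')})$ from Theorem \ref{tw:difquo}(i) and the hypothesis $f,M\in L^2(0,T;H^1_{\rm loc}(\Omega))$.

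\textbf{Conclusion.} Gathering these estimates produces a differential inequality of the form $\tfrac{d}{dt}\widetilde{\E}(t) \leq C\widetilde{\E}(t)+g(t)$ with $g\in L^1(0,T)$ uniformly in $h$. The initial value $\widetilde{\E}(0)$ is uniformly bounded in $h$ by Theorem \ref{tw:difquo}(i) applied to the $H^1_{\rm loc}$-regular initial data. Gronwall's lemma then yields $\widetilde{\E}(t)\leq C$ on $[0,T]$ uniformly in $h$. The remaining discrepancy between $\widetilde{\E}(t)$ and $\E(\eta D^h_k u,\eta D^h_k P)(t)$ consists of lower-order cross-terms produced by $\nabla\eta$ acting on $D^h_k u$ (controlled by the $H^1$-regularity of $u$ from Theorem \ref{existenceresult}) and on $D^h_k P$ (controlled via the $H^1$-regularity of $\Curl P$ from \eqref{ineq}); this establishes \eqref{estimate}.
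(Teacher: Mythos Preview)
Your overall strategy---difference quotients, test with $\eta^2 D^h_k u_{,t}$ and $\eta^2 D^h_k P_{,t}$, derive an energy inequality for the $\eta^2$-weighted energy, then Gronwall---coincides with the paper's. The treatment of the $\Curl$-commutator is also essentially the paper's: the factor $\|\Curl D^h_k P\|_{L^2(U)}=\|D^h_k\Curl P\|_{L^2(U)}$ is controlled a~priori by $\|\nabla\Curl P\|_{L^2(\Omega)}$ via Gaffney's inequality, and the remaining $\|\eta D^h_k P_{,t}\|^2$ is fed into Gronwall.

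There is, however, a genuine gap in your handling of the $\Dyw$-commutator. You write that $\|D^h_k\sigma(\nabla u-P)\|_{L^2(U)}$ is controlled ``through the PDE for $u_{,tt}$ and the global energy bound''. The PDE only gives $\Dyw\sigma=u_{,tt}-f\in {\rm L}^2$, and Theorem~\ref{existenceresult} only gives $\sigma\in {\rm L}^2$; neither yields an $h$-uniform bound on $D^h_k\sigma$ in $L^2(U)$ (that would amount to $\sigma\in H^1_{\rm loc}$, which is part of what is being proved). The correct splitting is the reverse of what you wrote: in the commutator $\int\eta\,D^h_k\sigma:(\nabla\eta\otimes D^h_k u_{,t})$, the factor that \emph{is} a~priori bounded uniformly in $h$ is $D^h_k u_{,t}$, because $u_{,t}\in{\rm C}([0,T];{\rm H}^1(\Omega))$ from Theorem~\ref{existenceresult} and hence $\|D^h_k u_{,t}\|_{L^2(U)}\le C\|\nabla u_{,t}\|_{L^2(\Omega)}$. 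The factor $\|\eta D^h_k\sigma\|_{L^2}^2$ is instead dominated by $\widetilde{\E}(t)$ and absorbed via Gronwall, exactly as the paper does in \eqref{4.13}.

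A second, smaller issue: your closing paragraph tries to pass from $\widetilde{\E}$ to the literal $\E(\eta D^h_k u,\eta D^h_k P)$ of \eqref{1.5} by controlling the $\nabla\eta$-cross terms. For the $\Curl$-part this produces $\nabla\eta\times D^h_k P_i$, which would require an $h$-uniform $L^2$ bound on $D^h_k P$---again circular, since $P\in H^1_{\rm loc}$ is the conclusion, and the $H^1$-regularity of $\Curl P$ does not supply it. In the paper this step is absent: the expression $\E(\eta D^h_k u,\eta D^h_k P)$ in the statement is used in the sense displayed in \eqref{4.3} (i.e., your $\widetilde{\E}$, with $\eta$ pulled outside $\nabla$ and $\Curl$), so no such comparison is needed.
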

\begin{proof} From Theorem \ref{existenceresult} we got that a solution $(u,P)$  of the problem \eqref{1.2}  with $P_i(x,t)\times n(x)=0$ on $\partial \Omega$ has the following regularity
	\begin{equation}
	\begin{split}
	u\in {\rm C}^1([0,T];{\rm H}^1(\Omega))\,,&\quad u_{,tt}\in {\rm C}([0,T];{\rm L}^2(\Omega ))\,,\\[1ex]
	P\in {\rm C}^1([0,T]; {\rm H}_0(\Curl;\Omega ))\,, &\quad P_{,tt}\in {\rm C}([0,T];{\rm L}^2(\Omega)),\\[1ex]
	\Curl\Curl P\in {\rm C}([0,T];{\rm L}^2(\Omega  ))\,\quad &\mathrm{and}\quad \nabla\Curl P\in {\rm C}([0,T];{\rm L}^2(\Omega))\,,
	\end{split}
	\label{assu}
	\end{equation}
where the last regularity statement in \eqref{assu} follows from inequality \eqref{ineq}. Fix $k\in\{1,2,3\}$ and assume that $h$ is sufficiently small. Calculating the time derivative of the energy \eqref{1.5} evaluated on localised differences provides
\begin{align}\label{4.3}
\frac{d}{dt}\big(\E(\eta\, D^h_k u, \eta\, D^h_k P)(t)\big)=&\int_{\Omega}\Big[\langle\eta\, D^h_k u_{,t},\eta\, D^h_k u_{,tt}\rangle + \langle\eta\, D^h_k P_{,t},\eta\, D^h_k P_{,tt}\rangle \Big]\,\di x\notag\\[1ex]
&+ \int_{\Omega}\Big[2\,\mu_{\rm e}\langle\eta\,\sym(\nabla D^h_k u-D^h_k P),\eta\,\sym(\nabla \dhk u_{,t}-D^h_k P_{,t})\rangle\notag\\[1ex]
&\qquad \quad +\lambda_{\rm e}\,\eta\,\tr(\nabla\dhk u-D^h_k P)\,\eta\,\tr(\nabla\dhk u_{,t}-D^h_k P_{,t})\\[1ex]
&\qquad \quad +2\,\mu_{\rm c}\langle\eta\,\skyw(\nabla\dhk u-D^h_k P),\eta\,\skyw(\nabla\dhk u_{,t}-D^h_k P_{,t})\rangle\notag\\[1ex]
&\qquad \quad +2\,\mu_{\mathrm{micro}} \,\eta^2\langle\sym \dhk P,\sym\dhk P_{,t}\rangle+\lambda_{\mathrm{micro}}\tr(\dhk P)\,\tr(\dhk P_{,t})\notag\\[1ex]
&\qquad \quad +\mu_{\rm micro} L_{\rm c}^2\langle\eta\,\curl \dhk P, \eta\,\curl \dhk P_{,t}\rangle\Big]\,\di x\notag
\end{align}
\begin{align}
\qquad \qquad\qquad\qquad\quad=&\int_{\Omega}\Big[\langle\eta\, D^h_k u_{,t},\eta\, D^h_k u_{,tt}\rangle+\langle 2\,\mu_{\rm e}\,\eta\,\sym(\nabla\dhk u-D^h_k P)+ \lambda_{\rm e}\,\eta\,\tr(\nabla\dhk u-D^h_k P)\id\notag\\[1ex]
&\qquad \qquad \qquad \qquad\qquad\qquad+ 2\,\mu_{\rm c}\,\eta\,\skyw(\nabla\dhk u-D^h_k P),\eta\,\nabla\dhk u_{,t}\rangle\Big]\,\di x\notag\\[1ex]
&+\int_{\Omega}\Big[\langle\eta\, D^h_k P_{,t},\eta\, D^h_k P_{,tt}\rangle-\langle 2\,\mu_{\rm e}\,\eta\,\sym(\nabla\dhk u-D^h_k P)+ \lambda_{\rm e}\,\eta\,\tr(\nabla\dhk u-D^h_k P)\id\notag\\[1ex]
&\quad \qquad  + 2\,\mu_{\rm c}\,\eta\,\skyw(\nabla\dhk u-D^h_k P)-2\,\mu_{\mathrm{micro}} \,\eta\,\sym \dhk P-\lambda_{\mathrm{micro}}\tr(\dhk P)\id,\eta\,\dhk P_{,t}\rangle\notag\\[1ex]
&\quad \quad \quad +\mu_{\rm micro} L_{\rm c}^2\langle \eta\,\Curl \dhk P, \eta\,\Curl \dhk P_{,t}\rangle \Big]\,\di x\,.\notag
\end{align}

It is worth to underline that the regularity \eqref{assu} of the solution $(u,P)$ implies that all integrals in \eqref{4.3} are well-defined. Let 
\begin{equation}
\eta\,\dhk\sigma=2\,\mu_{\rm e}\,\eta\,\sym(\nabla\dhk u-D^h_k P)+ \lambda_{\rm e}\,\eta\,\tr(\nabla\dhk u-D^h_k P)\,\id+2\,\mu_{\rm c}\,\eta\,\skyw(\nabla\dhk u-D^h_k P)\,,
\label{4.4}
\end{equation}
where
\begin{align}
\sigma=&\,2\,\mu_{\rm e}\sym(\nabla u- P)+ \lambda_{\rm e}\tr(\nabla u- P)\,\id+2\,\mu_{\rm c}\skyw(\nabla u- P)
\end{align}
is the  (non-symmetric) Cauchy-stress  tensor.
Then,
\begin{align}\label{4.5}
\frac{d}{dt}\big(\E(\eta\, D^h_k u, \eta\, D^h_k P)(t)\big)=&\int_{\Omega}\Big[\langle \eta\, D^h_k u_{,t},\eta\, D^h_k u_{,tt}\rangle+\langle\eta\,\dhk\sigma,\eta\,\nabla\dhk u_{,t}\rangle\Big]\,\di x \\[1ex]
&+\int_{\Omega}\Big[\langle \eta\, D^h_k P_{,t},\eta\, D^h_k P_{,tt}\rangle-\langle\eta\,\dhk\sigma-2\,\mu_{\mathrm{micro}} \,\eta\,\sym \dhk P-\lambda_{\mathrm{micro}}\tr(\dhk P)\id,\eta\,\dhk P_{,t}\rangle\notag\\[1ex]
&\qquad \quad +\mu_{\rm micro} L_{\rm c}^2\langle\eta\,\Curl \dhk P, \eta\,\Curl \dhk P_{,t}\rangle \Big]\,\di x\,.\notag
\end{align}
Denote by $(\cdot)_i$ ($i=1,2,3$) the rows of a $3\times 3$ matrix and set $u=(u_1,u_2,u_3)$. Therefore, for $i=1,2,3$ we conclude that 
\begin{equation}
\begin{split}
\dyw\big(\eta^2\dhk u_{i,t}\,\dhk\sigma_i\big)&= \langle\dhk\sigma_i,\eta^2\nabla\dhk u_{i,t}+2\,\eta\,\nabla\eta\,\dhk u_{i,t}\rangle+\eta^2\dhk u_{i,t}\dyw(\dhk\sigma_i)\,
\end{split}
\label{4.6}
\end{equation}
and 
\begin{equation}
\begin{split}
\sum_{i=1}^{3} \langle\eta\, \dhk\sigma_i,\eta\,\nabla\dhk u_{i,t}\rangle=&\sum_{i=1}^{3} \dyw\big(\eta^2\dhk u_{i,t}\,\dhk\sigma_i\big)-\sum_{i=1}^{3} \langle\eta\,\dhk\sigma_i,2\nabla\eta\,\dhk u_{i,t}\rangle-\sum_{i=1}^{3} \eta^2\dhk u_{i,t}\dyw(\dhk\sigma_i)\,.
\end{split}
\label{4.7}
\end{equation}
Moreover, we have
\begin{align}\label{4.8}
\dyw\big(\eta^2\,\mu_{\rm micro} L_{\rm c}^2\curl\dhk P_{i}\times\dhk P_{i,t}&\big)\!\!= \langle\dhk P_{i,t},\curl\big(\eta^2\,\mu_{\rm micro} L_{\rm c}^2\curl\dhk P_{i}\big)\rangle\notag\\[1ex]
&\qquad \qquad -\langle\eta^2\,\mu_{\rm micro} L_{\rm c}^2\curl\dhk P_{i},\curl\dhk P_{i,t}\rangle\\[1ex]
&=
\langle\dhk P_{i,t},2\eta\,\nabla\eta\,\times\mu_{\rm micro} L_{\rm c}^2\curl\dhk P_{i}+\eta^2\curl\big(\mu_{\rm micro} L_{\rm c}^2\curl\dhk P_{i}\big)\rangle\notag\\[1ex]
&\qquad \qquad-\langle \eta^2\,\mu_{\rm micro} L_{\rm c}^2\curl\dhk P_{i},\curl\dhk P_{i,t}\rangle\,.\notag
\end{align}
This leads to
\begin{equation}
\begin{split}
\sum_{i=1}^{3}& \mu_{\rm micro} L_{\rm c}^2\langle\eta\,\curl\dhk P_{i},\eta\,\curl\dhk P_{i,t}\rangle=-\sum_{i=1}^{3} \dyw\big(\eta^2\,\mu_{\rm micro} L_{\rm c}^2\curl\dhk P_{i}\times \dhk P_{i,t}\big)\\[1ex]
&+\sum_{i=1}^{3}\langle\eta\, \dhk P_{i,t},\eta\,\curl\big(\mu_{\rm micro} L_{\rm c}^2\curl\dhk P_{i}\big)\rangle+\sum_{i=1}^{3} \langle\eta\, \dhk P_{i,t},2\,\nabla\eta\,\times\mu_{\rm micro} L_{\rm c}^2\curl\dhk P_{i}\rangle\,.
\end{split}
\label{4.9}
\end{equation}
Inserting \eqref{4.7} and \eqref{4.9} into \eqref{4.5} and using equations \eqref{1.2} we arrive at
\begin{align}\label{4.10}
\frac{d}{dt}\big(\E(\eta\, D^h_k u, \eta\, D^h_k P)(t)\big)=&\int_{\Omega}\langle\eta\, D^h_k u_{,t},\eta\, \dhk f)\rangle\,\di x -2\int_{\Omega}\sum_{i=1}^{3} \langle\eta\,\dhk\sigma_i,\nabla\eta\,\dhk u_{i,t}\rangle,\di x\\[1ex]
&+\int_{\Omega}\langle\eta\, D^h_k P_{,t},\eta\, D^h_k M\rangle\,\di x+2\int_{\Omega}\sum_{i=1}^{3} \langle\eta\, \dhk P_{i,t},\nabla\eta\,\times\mu_{\rm micro} L_{\rm c}^2\curl\dhk P_{i}\rangle\,\di x\,.\notag
\end{align}
Notice that the divergence theorem and the properties \eqref{4.1} of the function $\eta$ show that the integrals over $\Omega$ of the first terms on the right-hand side of \eqref{4.7} and \eqref{4.9} are equal to zero. Now integrating with respect to time we obtain
\begin{equation}
\begin{split}
\E(\eta\, D^h_k u,& \eta\, D^h_k P)(t)=\E(\eta\, D^h_k u, \eta\, D^h_k P)(0)+\int_0^t\int_{\Omega}\langle\eta\, D^h_k u_{,t},\eta\, \dhk f)\rangle\,\di x\di \tau\\[1ex] 
&-2\int_0^t\int_{\Omega}\sum_{i=1}^{3} \langle\eta\,\dhk\sigma_i,\nabla\eta\,\dhk u_{i,t}\rangle\,\di x\di\tau+\int_0^t\int_{\Omega}\langle\eta\, D^h_k P_{,t},\eta\, D^h_k M\rangle\,\di x\di\tau\\[1ex]
&-2\int_0^t\int_{\Omega}\sum_{i=1}^{3} \langle\eta\, \dhk P_{i,t},\nabla\eta\,\times\mu_{\rm micro} L_{\rm c}^2\curl\dhk P_{i}\rangle\,\di x\di\tau\,.
\end{split} 
\label{4.11}
\end{equation}
The first integral on the right-hand side of \eqref{4.11} is estimated as follows
\begin{equation}
\begin{split}
\int_0^t\int_{\Omega}(\eta\, D^h_k u_{,t},\eta\, \dhk f)\,\di x\di \tau\leq \int_0^t  \lVert \nabla u_{,t} \rVert_ {{\rm L}^2(\Omega)} \lVert \nabla f \rVert_ {{\rm L}^2(\mathrm{supp\,\eta})}\,\di\tau
\end{split} 
\label{4.12}
\end{equation}
and the assumption on $f$ and the regularity of $u_{,t}$ yield that it is finite. Young's inequality implies that the second integral on the right-hand side of \eqref{4.11} can be estimated as follows
\begin{equation}
\begin{split}
\int_0^t\int_{\Omega}\sum_{i=1}^{3} \langle\eta\,\dhk\sigma_i,\nabla\eta\,\dhk u_{i,t}\rangle\,\di x\di\tau&\leq \int_0^t\int_{\Omega} \eta^2  \lVert \dhk\sigma\rVert^2\,\di x\di \tau +C \lVert \nabla\eta\,\rVert^2_{L^{\infty}(\Omega)}\int_0^t  \lVert \nabla u_{,t} \rVert_ {{\rm L}^2(\Omega)}^2\,\di\tau\\[1ex]
&\leq \int_0^t\E(\eta\, D^h_k u, \eta\, D^h_k P)(\tau)\,\di\tau+C \lVert \nabla\eta\,\rVert^2_{L^{\infty}(\Omega)}\int_0^t  \lVert \nabla u_{,t} \rVert_ {{\rm L}^2(\Omega)}^2\,\di\tau\,.
\end{split} 
\label{4.13}
\end{equation}
Again, using the regularity of $u_{,t}$ we have that the second integral on the right-hand side of \eqref{4.13} is finite. In turn, the third integral of the right-hand side of \eqref{4.11} is evaluated as
\begin{equation}
\begin{split}
\int_0^t\int_{\Omega}\langle\eta\, D^h_k P_{,t},\eta\, D^h_k M)\rangle\,\di x\di\tau&\leq \int_0^t\int_{\Omega} \eta^2  \lVert \dhk P_{,t}\rVert^2\,\di x\di\tau +C\int_0^t\int_{\Omega} \lVert \nabla M\rVert^2\,\di x\di \tau\\[1ex]
&\leq \int_0^t\E(\eta\, D^h_k u, \eta\, D^h_k P)(\tau)\,\di\tau+C\,.
\end{split} 
\label{4.14}
\end{equation}
Additionally, since
$
\curl\dhk P_{i}=\dhk(\curl P_i)\,,
$
we obtain 
\begin{equation}
\begin{split}
\int_0^t\int_{\Omega}&\sum_{i=1}^{3} \langle\eta\, \dhk P_{i,t},\nabla\eta\,\times\mu_{\rm micro} L_{\rm c}^2\curl\dhk P_{i}\rangle\,\di x\di\tau\\[1ex]
&\leq \int_0^t\int_{\Omega} \eta^2  \lVert \dhk P_{,t}\rVert^2\,\di x\di \tau+ C \lVert \nabla\eta\,\rVert^2_{L^{\infty}(\Omega)}\int_0^t  \lVert \nabla\Curl P \rVert_ {{\rm L}^2(\Omega)}^2\,\di\tau\\[1ex]
&\leq \int_0^t\E(\eta\, D^h_k u, \eta\, D^h_k P)(\tau)\,\di\tau+C \lVert \nabla\eta\,\rVert^2_{L^{\infty}(\Omega)}\int_0^t  \lVert \nabla\Curl P \rVert_ {{\rm L}^2(\Omega)}^2\,\di\tau\,.
\end{split} 
\label{4.16}
\end{equation}
Inequality \eqref{ineq} yields that the second term on the right-hand side of \eqref{4.16} is bounded. Substituting \eqref{4.12}-\eqref{4.16} into \eqref{4.11} we get
\begin{equation}
\begin{split}
\E&(\eta\, D^h_k u, \eta\, D^h_k P)(t)\leq\E(\eta\, D^h_k u, \eta\, D^h_k P)(0)+C\big(\int_0^t\E(\eta\, D^h_k u, \eta\, D^h_k P)(\tau)\,\di\tau+1\big)\,.\\[1ex] 
\end{split} 
\label{4.17}
\end{equation}
Thus, from Gronwall's inequality 
\begin{equation}
\begin{split}
\E&(\eta\, D^h_k u, \eta\, D^h_k P)(t)\leq C\big(\E(\eta\, D^h_k u, \eta\, D^h_k P)(0)+1\big)\,.\\[1ex] 
\end{split} 
\label{4.18}
\end{equation}
for all $t\in (0,T)$. Applying the regularity of the initial data we obtain
$
\E(\eta\, D^h_k u, \eta\, D^h_k P)(t)\leq C\,, 
$
where the constant $C>0$ depends on the length of time interval $(0,T)$ ($C$ does not depend on $h$).
\end{proof}
\subsection{The main result}
\begin{tw}	\label{regularity} \label{mr} {\rm (Regularity of the solution)} Suppose that all hypotheses of Theorem \ref{tw:regu} hold. Moreover, let $P^{(0)}\in {\rm H}^1_{\rm loc}(\Omega)$. Then, 
	\begin{equation}
	u\in {\rm L}^{\infty}(0,T;{\rm H}^2_{\rm loc}(\Omega))\,,\quad P\in {\rm L}^{\infty}(0,T;{\rm H}^1_{\rm loc}(\Omega))\quad \mathrm{and}\quad 
	\Curl P\in {\rm L}^{\infty}(0,T;{\rm H}^1(\Omega))\,.
	\label{regu1}
	\end{equation}
\end{tw}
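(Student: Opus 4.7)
The plan is to convert the uniform bound \eqref{estimate} of Theorem \ref{tw:regu} into uniform-in-$h$ $L^2$-bounds on difference quotients of $P$ and $\nabla u$ and then invoke Theorem \ref{tw:difquo}(ii). Fix nested open sets $V \Subset W \Subset U \Subset \Omega$ together with cutoffs $\eta_W, \eta_U$ as in \eqref{4.1}, with $\eta_W \equiv 1$ on $V$ and vanishing outside $W$, and $\eta_U \equiv 1$ on $W$ and vanishing outside $U$.

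\emph{Step 1 ($P \in L^{\infty}(0,T; H^1_{\rm loc})$).} Applying Theorem \ref{tw:regu} with $\eta_U$ and reading off the kinetic contribution in \eqref{1.5} yields $\|\eta_U\, \dhk P_{,t}(\tau)\|_{L^2(\Omega)} \leq C$ uniformly in $h$ and $\tau \in [0,T]$. The identity
\[
\eta_U\, \dhk P(t) = \eta_U\, \dhk P^{(0)} + \int_0^t \eta_U\, \dhk P_{,t}(\tau)\, d\tau,
\]
combined with Theorem \ref{tw:difquo}(i) applied to the new hypothesis $P^{(0)} \in H^1_{\rm loc}(\Omega)$, gives $\|\eta_U\, \dhk P(t)\|_{L^2(\Omega)} \leq C$ uniformly in $h$ and $t$. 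Theorem \ref{tw:difquo}(ii) then delivers $P \in L^{\infty}(0,T; H^1(W))$, so in particular $\|\dhk P\|_{L^2(W)} \leq C$ uniformly.

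\emph{Step 2 ($u \in L^{\infty}(0,T; H^2_{\rm loc})$).} Apply the global coercivity inequality displayed just before \eqref{1.5} to the test fields $\eta_W\, \dhk u \in H^1_0(\Omega)$ and $\eta_W\, \dhk P \in H_0(\Curl;\Omega)$. Its left-hand side equals $\|\nabla(\eta_W\, \dhk u)\|_{L^2(\Omega)}^2 + \|\eta_W\, \dhk P\|_{L^2(\Omega)}^2 + \|\Curl(\eta_W\, \dhk P)\|_{L^2(\Omega)}^2$. Expanding
\[
\nabla(\eta_W\, \dhk u) - \eta_W\, \dhk P = \eta_W (\nabla \dhk u - \dhk P) + (\nabla \eta_W) \otimes \dhk u
\]
and $\Curl(\eta_W\, \dhk P) = \eta_W\, \Curl \dhk P + (\text{terms in } \nabla \eta_W \text{ and } \dhk P)$, every contribution to the right-hand side of the coercivity is uniformly bounded in $L^2(\Omega)$: the $\eta_W$-weighted pieces by the energy bound \eqref{estimate}, the $\dhk u$-corrections by $u \in H^1(\Omega)$, and the $\dhk P$-corrections by the $L^2(W)$-bound from Step 1. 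Consequently $\|\nabla(\eta_W\, \dhk u)\|_{L^2(\Omega)} \leq C$; restricting to $V$ gives $\|\nabla \dhk u\|_{L^2(V)} \leq C$, and Theorem \ref{tw:difquo}(ii) delivers $u \in L^{\infty}(0,T; H^2(V))$.

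\emph{Step 3 ($\Curl P \in L^{\infty}(0,T; H^1(\Omega))$).} This step is independent of the difference-quotient machinery: the Gaffney-type estimate \eqref{ineq} together with $\Curl P \in C([0,T]; L^2(\Omega))$ provided by Theorem \ref{existenceresult} yields the claim at once. The main obstacle of the proof is Step 1: the energy $\E$ controls only $\sym \dhk P$ and $\Curl \dhk P$, and a direct application of Neff--Pauly--Witsch to $\eta\, \dhk P$ would reintroduce the unknown $\|\dhk P\|_{L^2}$ through the curl correction $\nabla \eta \times \dhk P$, making a naive bootstrap circular. The new hypothesis $P^{(0)} \in H^1_{\rm loc}$ breaks this circularity by furnishing an $L^2$-bound on $\dhk P$ at $t = 0$, which the kinetic term in $\E$ propagates to all later times and then feeds into the coercivity argument of Step 2.
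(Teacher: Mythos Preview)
Your proof is correct and follows the paper's strategy closely for the $P$-regularity (Step~1) and the $\Curl P$-regularity (Step~3): both you and the authors extract $\|\dhk P_{,t}\|_{L^2}$ from the kinetic part of the energy estimate, integrate in time using the new hypothesis $P^{(0)}\in H^1_{\rm loc}$, and invoke Gaffney's inequality for $\Curl P$.

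The only genuine methodological difference is in Step~2. You apply the full coupled coercivity inequality (the one involving both $u$ and $P$, stated just before \eqref{1.5}) to the pair $(\eta_W\dhk u,\eta_W\dhk P)$, which forces you to control the commutator $\nabla\eta_W\times\dhk P$ in the $\Curl$-term and therefore to have Step~1 already in hand; this is why you need the extra nesting $V\Subset W\Subset U$ and two cutoffs. The paper instead decouples $u$ from $P$ at this stage: from the energy estimate they read off $\|\eta\,\sym(\nabla\dhk u-\dhk P)\|_{L^2}$ and $\|\eta\,\sym\dhk P\|_{L^2}$, add them to bound $\|\eta\,\sym(\nabla\dhk u)\|_{L^2}$, and then apply Korn's first inequality to $\eta\,\dhk u\in H^1_0(\Omega)$ alone. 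Their route is slightly more economical (one cutoff suffices, and Step~1 is not a prerequisite for Step~2), while yours has the virtue of using only the single coercivity statement already recorded in the paper rather than invoking Korn separately. Both arguments are valid and yield the same conclusion.
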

\begin{proof} The proof is divided into two parts. First we are going to prove that $P\in {\rm L}^{\infty}(0,T;{\rm H}^1_{\rm loc}(\Omega))$ and $\Curl P\in {\rm L}^{\infty}(0,T;{\rm H}^1(\Omega))$.
\begin{itemize}
	\item Theorem \ref{tw:regu} implies that $ \lVert \dhk P_{,t}\rVert^2_{{\rm L}^{\infty}(0,T;{\rm L}^2(V))}\leq C$, thus from Theorem \ref{tw:difquo} we deduce 
	\begin{equation}
	\begin{split}
	 \lVert \nabla P_{,t}\rVert^2_{{\rm L}^{\infty}(0,T;{\rm L}^2(V))}\leq C\,.
	\end{split} 
	\label{4.20}
	\end{equation}
	Moreover, using the formula
	$
	\nabla P(t)=\nabla P^{(0)}+\int_0^t \nabla P_{,\tau}(\tau) \,\di\tau
	$
	we get 
	\begin{equation}
	\begin{split}
	 \lVert \nabla P(t) \rVert_ {{\rm L}^2(V)}= \lVert \nabla P^{(0)} \rVert_ {{\rm L}^2(V)}+\int_0^T \lVert \nabla P_{,\tau}(\tau) \rVert_ {{\rm L}^2(V)} \,\di\tau\,.
	\end{split} 
	\label{4.22}
	\end{equation}
	The regularity of $P^{(0)}$ and inequality \eqref{4.20} yield that 
	$
	 \lVert \nabla P\rVert^2_{{\rm L}^{\infty}(0,T;{\rm L}^2(V))}\leq C
	$
	and $P\in {\rm L}^{\infty}(0,T;{\rm H}^1_{\rm loc}(V))$. Notice that inequality \eqref{ineq} gives us also that $\Curl P\in {\rm L}^{\infty}(0,T;{\rm H}^1(\Omega))$.
	\item Observe that 
	\begin{equation}
	\begin{split}
	 \lVert \eta\,\sym(\nabla\dhk(u))\rVert^2_{{\rm L}^2(\Omega)}&\leq  2\Big(\lVert \eta\,\sym(\nabla\dhk(u)-\dhk P)\rVert^2_{{\rm L}^2(\Omega)}+ \lVert \eta\,\sym(\dhk P)\rVert^2_{{\rm L}^2(\Omega)}\Big)
	\end{split} 
	\label{4.24}
	\end{equation}
	and Theorem \ref{tw:regu} implies that $ \lVert \eta\,\sym(\nabla\dhk(u))\rVert^2_{{\rm L}^2(\Omega)}$ is bounded independently on $h$. Now, the regularity of $u$ follows from the identities
	\begin{equation}
	\begin{split}
	\eta\,(\partial_k\nabla u)&=\nabla(\eta\,\partial_k u)-\nabla\eta\,\otimes\partial_k u\,,\\[1ex]
	\sym(\nabla(\eta\,\partial_k u))&=\eta\,(\partial_k\sym(\nabla u))+\sym(\nabla\eta\,\otimes\partial_k u)
	\end{split} 
	\label{4.25}
	\end{equation}
	and from Korn's inequality \cite{Neff00b},  in the following form
	\begin{equation}
	\begin{split}
	 \lVert \eta\,(\partial_k\nabla u)\rVert^2_{{\rm L}^2(\Omega)}&\leq  2\Big(\lVert \nabla(\eta\,\partial_k u)\rVert^2_{{\rm L}^2(\Omega)}
	+ \lVert \nabla\eta\,\otimes\partial_k u\rVert^2_{{\rm L}^2(\Omega)}\Big)\\[1ex]
	&\leq C \Big(\lVert \sym(\nabla(\eta\,\partial_k u))\rVert^2_{{\rm L}^2(\Omega)} +  \lVert \nabla\eta\, \rVert_ {L^{\infty}(\Omega)} \lVert \nabla u\rVert^2_{{\rm L}^2(\Omega)}\Big)\\[1ex]
	&\leq C\Big( \lVert \eta\,(\partial_k\sym(\nabla u))\rVert^2_{{\rm L}^2(\Omega)}+ \lVert \nabla u\rVert^2_{{\rm L}^2(\Omega)}\Big)\,.
	\end{split} 
	\label{4.26}
	\end{equation}
	This proves that $u\in {\rm L}^{\infty}(0,T;{\rm H}^2_{\rm loc}(\Omega))$. \qedhere
\end{itemize}
\end{proof}
\begin{remark} Note that the regularity for the displacement vector $u$ is obtained from the isotropic elastic energy and microstrain self energy. Comparing this with the general regularity theory for hyperbolic equations, it is a standard approach \cite{neff2008regularity}. It is clear that locally the dislocation energy $\lVert \Curl P\rVert^2$ does not control all weak derivatives of the tensor $P$ in ${\rm L}^2$. But, in the dynamic case the total energy contains also the kinematic energy. From the difference quotient method and the energy estimate \eqref{estimate} we are able to control all weak partial  derivatives of the micro-distortion tensor $P_{,t}$ locally in ${\rm L}^2$. Assuming that the initial tensor $P^{(0)}$ has a better regularity, we also control all weak partial derivatives of the tensor $P$ locally in ${\rm L}^2$. 
\end{remark}
\bibliographystyle{plain} 

\begin{thebibliography}{99}

	
	\bibitem{aivaliotis2019microstructure}
	A.~Aivaliotis, A.~Daouadji, G.~Barbagallo, D.~Tallarico, P.~Neff, and A.~Madeo.
	\newblock Microstructure-related {Stoneley} waves and their effect on the
	scattering properties of a {2D Cauchy}/relaxed-micromorphic interface.
	\newblock {\em Wave Motion}, 90:99--120, 2019.
	
	\bibitem{aivaliotisfrequency}
	A.~Aivaliotis, D.~Tallarico, M.V. d'Agostino, A.~Daouadji, P.~Neff, and
	A.~Madeo.
	\newblock Frequency-and angle-dependent scattering of a finite-sized
	meta-structure via the relaxed micromorphic model.
	\newblock {\em Archive of Applied Mechanics}, 90:1073--1096, 2019.
	
	\bibitem{tantrace}
	A.~Alonso and A.~Valli.
	\newblock Some remarks on the characterization of the space of tangential
	traces of {$H({\rm rot};\Omega)$} and the construction of an extension
	operator.
	\newblock {\em Manuscripta Math.}, 89(2):159--178, 1996.
	
	\bibitem{Dauge}
	C.~Amrouche, C.~Bernardi, M.~Dauge and V.~Girault
	\newblock Vector potentials in three-dimensional non-smooth domains
	\newblock {\em Math. Methods Appl. Sci}, 21 (1998), 823--864.
	
	\bibitem{barbagallo2019relaxed}
	G.~Barbagallo, D.~Tallarico, M.V. d'Agostino, A.~Aivaliotis, P.~Neff, and
	A.~Madeo.
	\newblock Relaxed micromorphic model of transient wave propagation in
	anisotropic band-gap metastructures.
	\newblock {\em Int. J. Solids and Struct.}, 162:148--163, 2019.
	
	\bibitem{BNPS2}
	S.~Bauer, P.~Neff, D.~Pauly, and G.~Starke.
	\newblock Dev-{Div} and {DevSym}-{DevCurl} inequalities for incompatible square
	tensor fields with mixed boundary conditions.
	\newblock {\em ESAIM: COCV}, 22(1):112--133, 2016.
	
    \bibitem{tandyw},
  C.~B\'egue, C.~Conca, F.~Murat and O.~Pironneau.
 \newblock Les \'equations de {S}tokes et de {N}avier-{S}tokes avec des conditions aux limites sur la pression. Nonlinear partial differential equations and their applications. {C}oll\`ege de {F}rance {S}eminar, {V}ol.\ {IX} ({P}aris, 1985--1986).
 \newblock Pitman Res. Notes Math. Ser., Vol. 181. Longman Sci. Tech., Harlow,; 1988: 179--264.

	
	\bibitem{blanco2000large}
	A.~Blanco et~al.
	\newblock Large-scale synthesis of a silicon photonic crystal with a complete
	three-dimensional bandgap near 1.5 micrometres.
	\newblock {\em Nature}, 405(6785):437--440, 2000.
	
	\bibitem{electrobook}
	M.~Cessenat.
	\newblock {\em Mathematical {M}ethods in {E}lectromagnetism. Linear Theory and
		Applications}, volume~41 of {\em Series on Advances in Mathematics for
		Applied Sciences}.
	\newblock World Scientific Publishing Co., Inc., River Edge, NJ, 1996.
	
	\bibitem{Costabel90}
	M.~Costabel
    \newblock A remark on the regularity of solutions of Maxwell's equations on Lipschitz domains.
    \newblock {\em  Math. Methods Appl. Sci.}, 12 (4): 365-368, 1990.

	\bibitem{Costabel} M. Costabel. 	\newblock A coercive bilinear form for Maxwell's equations. \newblock {\em J. Math. Anal. Appl.}, 157:527-541, 1991.
	
	\bibitem{Costabel99}
	M.~Costabel and M.~Dauge, 
		\newblock Un r\'esultat de densit\'e pour les \'equations de Maxwell r\'egularis\'ees dans un domaine lipschitzien.
		\newblock {\em C. R. Acad. Sci. Paris Sr. I Math.}, 327 (9): 849--854, 1999.
	
	
	
	\bibitem{d2019effective}
	M.V. d'Agostino, G.~Barbagallo, I.D. Ghiba, B.~Eidel, P.~Neff, and A.~Madeo.
	\newblock Effective description of anisotropic wave dispersion in mechanical
	band-gap metamaterials via the relaxed micromorphic model.
	\newblock {\em J. Elasticity}, 139:299--329, 2019.
	
	\bibitem{d2017panorama}
	M.V. d'Agostino, G.~Barbagallo, I.D. Ghiba, A.~Madeo, and P.~Neff.
	\newblock A panorama of dispersion curves for the weighted isotropic relaxed
	micromorphic model.
	\newblock {\em Z. Angew. Math. Mech.}, 97(11):1436--1481, 2017.
	
	\bibitem{Lions}
	R.~Dautray and J.L.~Lions.
	\newblock Analyse math\'ematique et calcul num\'erique.
	Masson, Paris, 1988.
	
	\bibitem{Eringen99}
	A.C. Eringen.
	\newblock {\em Microcontinuum {F}ield {T}heories.}
	\newblock Springer, Heidelberg, 1999.

	
	\bibitem{evansbook}
	L.~C. Evans.
	\newblock {\em Partial {D}ifferential {E}quations}, volume~19 of {\em Graduate
		Studies in Mathematics}.
	\newblock American Mathematical Society, Providence, RI, 1998.
	
	\bibitem{Sebastian1}
	I.~D. Ghiba, S.~Owczarek, and P.~Neff.
	\newblock Existence results for non-homogeneous boundary conditions in the
	relaxed micromorphic model.
	\newblock {\em Math. Meth. Appl. Sci.}, 44: 2040-2049, 2021.
	
	\bibitem{GhibaNeffExistence}
	I.D. Ghiba, P.~Neff, A.~Madeo, L.~Placidi, and G.~Rosi.
	\newblock The relaxed linear micromorphic continuum: Existence, uniqueness and
	continuous dependence in dynamics.
	\newblock {\em Math. Mech. Solids}, 20:1171--1197, 2015.
	
	\bibitem{Giraultbook}
	V.~Girault and P.-A. Raviart.
	\newblock {\em Finite {E}lement {M}ethods for {N}avier-{S}tokes Equations.
		{T}heory and {A}lgorithms.}, volume~5 of {\em Springer Series in
		Computational Mathematics}.
	\newblock Springer-Verlag, Berlin, 1986.
	
	\bibitem{Grisvard} P. Grisvard. \newblock {\it Elliptic Problems in Nonsmooth Domains}. Pitman (Advanced Publishing Program): Boston, 1985. 
	
	\bibitem{IesanNappa2001}
	D.~Ie\c{s}an.
	\newblock Extremum principle and existence results in micromorphic elasticity.
	\newblock {\em Int. J. Eng. Sci.}, 39:2051--2070, 2001.
	
	\bibitem{LMN} P. Lewintan,  S. M\"uller, and P. Patrizio. \newblock Korn inequalities for incompatible tensor fields in three space dimensions with conformally invariant dislocation energy.\newblock arXiv preprint arXiv:2011.10573, 2020.
	
	
		\bibitem{liu2000locally}
	Z.~Liu et~al.
	\newblock Locally resonant sonic materials.
	\newblock {\em Science}, 289(5485):1734--1736, 2000.
	
	\bibitem{MadeoNeffGhibaW}
	A.~Madeo, P.~Neff, I.~D. Ghiba, L.~Placidi, and G.~Rosi.
	\newblock Wave propagation in relaxed linear micromorphic continua: modelling
	metamaterials with frequency band-gaps.
	\newblock {\em Cont. Mech. Therm.}, 27:551--570, 2015.
	
	\bibitem{madeo2016reflection}
	A.~Madeo, P.~Neff, I.D. Ghiba, and G.~Rosi.
	\newblock Reflection and transmission of elastic waves in non-local band-gap
	metamaterials: a comprehensive study via the relaxed micromorphic model.
	\newblock {\em J. Mech. Phys. Solids}, 95:441--479, 2016.
	
	\bibitem{Mindlin64}
	R.D. Mindlin.
	\newblock Micro-structure in linear elasticity.
	\newblock {\em Arch. Rat. Mech. Anal.}, 16:51--77, 1964.
	
	\bibitem{Neff00b}
	P.~Neff.
	\newblock On {K}orn's first inequality with nonconstant coefficients.
	\newblock {\em Proc. Roy. Soc. Edinb. A}, 132:221--243, 2002.
	
	\bibitem{NeffGhibaMicroModel}
	P.~Neff, I.~D. Ghiba, A.~Madeo, L.~Placidi, and G.~Rosi.
	\newblock A unifying perspective: the relaxed linear micromorphic continuum.
	\newblock {\em Cont. Mech. Therm.}, 26:639--681, 2014.
	
	\bibitem{neff2008regularity}
	P.~Neff and D.~Knees.
	\newblock Regularity up to the boundary for nonlinear elliptic systems arising
	in time-incremental infinitesimal elasto-plasticity.
	\newblock {\em SIAM J. Math. Anal.}, 40(1):21--43, 2008.
	
	\bibitem{NeffPaulyWitsch}
	P.~Neff, D.~Pauly, and K.J. Witsch.
	\newblock Poincar\'{e} meets {K}orn via {M}axwell: Extending {K}orn's first
	inequality to incompatible tensor fields.
	\newblock {\em J. Differential Equations}, 258:1267--1302, 2015.
	
	
	
	\bibitem{picard}
	R.~Picard, S.~Trostorff, and M.~Waurick.
	\newblock On some models for elastic solids with micro-structure.
	\newblock {\em Z. Angew. Math. Mech.}, 95(7):664--689, 2014.
	

  \bibitem{Seifert} Ch. Seifert, S. Trostorff, and M. Waurick. {\it 23rd Internet Seminar: Evolutionary Equations}, https://arxiv.org/abs/2003.12403, 2020. 
\end{thebibliography}
\begin{footnotesize}
	\noindent{\bf Acknowledgements:}  	The  work of I.D. Ghiba  was supported by a grant of the Romanian Ministry of Research
	and Innovation, CNCS--UEFISCDI, Project no.
	PN-III-P1-1.1-TE-2019-0348, Contract No. TE 8/2020, within PNCDI III.
	
	\medskip
	
	\noindent \textbf{Conflicts of interest:}\ This work does not have
	any conflicts of interest.
	
\addcontentsline{toc}{section}{References}

\end{footnotesize}
\end{document}